\newtheorem{theorem}{Theorem}[section]
\newtheorem{corollary}[theorem]{Corollary}
\newtheorem{lemma}[theorem]{Lemma}
\theoremstyle{definition}
\def\n{\underline n}
\def\Hom{\operatorname{Hom}}
\def\p{\frak p}
\def\m{\frak m}
\def\q{\frak q}
\def\n{\mathbb{N}}
\def\Spec{\operatorname{Spec}}
\def\lim{\operatorname{\lim}}
\def\dlim{{\varinjlim}_n}
 \def\Hom{\operatorname{Hom}}
\def\dim{\operatorname{dim}}
\def\Ext{\operatorname{Ext}}
\def\Im{\operatorname{Im}}
\def\N-dim{\operatorname{N-dim}}
\def\depth{\operatorname{depth}}
\def\f-depth{\operatorname{f-depth}}
\def\Supp{\operatorname{Supp}}
\def\Att{\operatorname{Att}}
\def\Ass{\operatorname{Ass}}
\def\ann{\operatorname{ann}}
\def\Ker{\operatorname{Ker}}
\def\mt{\longrightarrow}
\def\sr{\rightarrow}
\def\Max{\operatorname{Max}}
\begin{document}
\title{\bf On the cofiniteness of generalized local cohomology modules}
\author{Nguyen Tu Cuong$^a$, Shiro Goto$^b$ and Nguyen Van Hoang$^c$}
\date{\small }
\maketitle

\vskip .5cm {\begin{quote} {\bf Abstract}{\footnote{{\it{Key words and phrases: }} Generalized local cohomology, $I$-cofiniteness. \hfill\break
{\it{2000 Subject  Classification:}} 13C15, 13D45  \hfill\break
{$^A$ Institute of Mathematics, 18 Hoang Quoc Viet, Hanoi, Vietnam, \\ {\it E-mail} : ntcuong@math.ac.vn}; 
\hfill\break {$^B$ Department of Mathematics, School of Science and Technology, Meiji University, 1-1-1 Higashi-mita, Tama-ku, Kawasaki 214-8571, Japan, \\{\it E-mail} : goto@math.meiji.ac.jp};
\hfill\break {$^C$ Meiji Institute for Advanced Study of Mathematical Sciences (MIMS), Meiji University
  1-1-1 Higashi-mita, Tama-ku, Kawasaki 214-8571, Japan, \\{\it E-mail} : nguyenvanhoang1976@yahoo.com}}. {\footnotesize Let $R$ be a commutative Noetherian ring, $I$ an ideal of $R$ and $M$, $N$ two finitely generated $R$-modules. The aim of this paper is to investigate the $I$-cofiniteness of  generalized local cohomology modules $\displaystyle H^j_I(M,N)=\dlim\Ext^j_R(M/I^nM,N)$ of $M$ and $N$ with respect to
$I$. 
We first prove that if $I$ is a principal ideal then $H^j_I(M,N)$ is $I$-cofinite for all $M, N$ and all $j$. Secondly, let $t$ be a non-negative integer such that $\dim\Supp(H^j_I(M,N))\le 1\text{ for all }j<t.$ Then $H^j_I(M,N)$ is $I$-cofinite for all $j<t$ and $\Hom(R/I,H^t_I(M,N))$ is finitely generated. Finally, we show that if $\dim(M)\le 2$ or $\dim(N)\le 2$ then $H^j_I(M,N)$ is $I$-cofinite for all $j$.}}
\end{quote}

\section{Introduction} 

Throughout this note the ring $R$ is commutative Noetherian. Let $N$ be finitely generated $R$-modules and $I$ an ideal of $R$. In \cite{Gro}, A. Grothendieck conjectured that if $I$ is an ideal of $R$ and $N$ is a finitely generated $R$-module, then $\Hom_R(R/I,H^j_I(N))$ is finitely generated for all $j\ge 0 $. R. Hartshorne provides a counter-example to this conjecture in \cite{Har}. He also defined an $R-$module $K$ to be $I$-cofinite if $\Supp_R(K)\subseteq V(I)$ and $\Ext^j_R(R/I,K)$ is finitely generated for all $j\ge 0$ and he asked the following question. 
\medskip

\noindent{\bf Question.} {\it For which rings $R$ and ideals $I$ are the modules $H^j_I(N)$ is $I$-cofinite for all $j$ and all finitely generated modules $N$?}
\medskip

\noindent Hartshorne showed that if $N$ is a finitely generated $R$-module, where $R$ is a complete regular local ring, then $H^j_I(N)$ is $I$-cofinite in two cases: \\
$\text{ }$ (i) $I$ is a principal ideal (see \cite[Corollary 6.3]{Har});\\
$\text{ }$ (ii) $I$ is a prime ideal with $\dim(R/I) = 1$ (see \cite[Corollary 7.7]{Har}).\\
K. I. Kawasaki has proved that if $I$ is a principal ideal in a commutative Noetherian ring then $H^j_I(N)$ are $I$-cofinite for all finitely generated $R$-modules $N$ and all $j\ge 0$ (see \cite[Theorem 1]{Kaw1}). D. Delfino and T. Marley \cite[Theorem 1]{DM} and K. I. Yoshida \cite[Theorem 1.1]{Yos} refined result (ii) to more general situation that if $N$ is a finitely generated module over a commutative Noetherian local ring $R$ and $I$ is an ideal of $R$ such that $\dim(R/I) = 1$, then $H^j_I(N)$ are $I$-cofinite for all $j\ge 0$. Recently, K. Bahmanpour and R. Naghipour have extended this result to the case of non-local ring; more precisely, they showed that if $t$ is a non-negative integer such that $\dim\Supp(H^j_I(N))\le 1$ for all $j<t$ then $H^0_I(N), H^1_I(N),\ldots, H^{t-1}_I(N)$ are $I$-cofinite and $\Hom(R/I, H^t_I(N))$ is finitely generated (see \cite[Theorem 2.6]{BaNa}).  

\noindent There are some generalizations of the theory of local cohomology modules. The following generalization of local cohomology theory is given by J. Herzog in \cite{Her}: Let $j$ be a non-negative integer and $M$ a finitely generated $R$-module. Then the $j^{th}$ generalized local cohomology module of $M$ and $N$ with respect to $I$ is defined by
 $$\displaystyle H^j_I(M,N)=\dlim\Ext^j_R(M/I^nM,N).$$ 
These modules were studied further in many research papers such as: \cite{Su}, \cite{Ya1}, \cite{Bz}, \cite{HZ}, \cite{Ya2}, \cite{Kaw2}, \cite{Hoa}, \cite{CH1}, \cite{Char1}, \cite{Char2}, \ldots. It is clear that $H^j_I(R,N)$ is just the ordinary local cohomology module $H^j_I(N)$. 
\medskip

\noindent The purpose of this paper is to investigate a similar question as above for the theory of generalized local cohomology.  Our first main result is the following theorem.
\begin{theorem}\label{dlc} If $I$ is a principal ideal then $H^j_I(M,N)$ is $I$-cofinite for all finitely generated $R-$modules $M, N$ and all $j$. 
\end{theorem}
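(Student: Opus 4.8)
The plan is to reduce the theorem to the result of Kawasaki on ordinary local cohomology (\cite[Theorem 1]{Kaw1}), exploiting the fact that a principal ideal has cohomological dimension at most one. Write $I=(a)$. The first step is to produce, for every $j\ge 0$, a short exact sequence
$$0\mt H^1_{(a)}\big(\Ext^{j-1}_R(M,N)\big)\mt H^j_{(a)}(M,N)\mt \Gamma_{(a)}\big(\Ext^j_R(M,N)\big)\mt 0,$$
where $\Gamma_{(a)}=H^0_{(a)}$ and, by convention, $\Ext^{-1}_R(M,N)=0$. This comes from the standard spectral sequence $E_2^{p,q}=H^p_{(a)}\big(\Ext^q_R(M,N)\big)\Rightarrow H^{p+q}_{(a)}(M,N)$: since $(a)$ is generated by a single element, $H^p_{(a)}(-)=0$ for all $p\ge 2$, so this spectral sequence is concentrated in the two columns $p=0$ and $p=1$, all higher differentials are forced to vanish, and it degenerates at the $E_2$-page; what survives is exactly the displayed two-step filtration. (Equivalently, the sequence is obtained by tensoring a representative of $\mathbf{R}\Hom_R(M,N)$ with the \v{C}ech complex $0\to R\to R_a\to 0$ of $(a)$ and taking the filtration by columns of the resulting double complex.)

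The second step is to see that the two outer terms are $I$-cofinite. Since $R$ is Noetherian and $M,N$ are finitely generated, $\Ext^q_R(M,N)$ is a finitely generated $R$-module for each $q$. Hence $\Gamma_{(a)}\big(\Ext^j_R(M,N)\big)$, being a submodule of a finitely generated module, is finitely generated, and its support lies in $V(a)=V(I)$; a finitely generated module supported in $V(I)$ is $I$-cofinite. For the left-hand term, apply Kawasaki's theorem to the finitely generated module $E:=\Ext^{j-1}_R(M,N)$ and the principal ideal $(a)$: it gives that $H^1_{(a)}(E)$ is $(a)$-cofinite, i.e. $I$-cofinite.

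Finally, I would invoke the elementary closure of the class of $I$-cofinite modules under extensions: given an exact sequence $0\to K'\to K\to K''\to 0$ with $K'$ and $K''$ $I$-cofinite, one has $\Supp K\sb V(I)$, and the long exact sequence obtained by applying $\Hom_R(R/I,-)$ sandwiches each $\Ext^i_R(R/I,K)$ between finitely generated modules, so it is finitely generated; thus $K$ is $I$-cofinite. Applying this to the short exact sequence of the first step shows that $H^j_{(a)}(M,N)$ is $I$-cofinite, which is the assertion.

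The only genuinely delicate point is the first step, namely setting up and degenerating the spectral sequence $H^p_I(\Ext^q_R(M,N))\Rightarrow H^{p+q}_I(M,N)$; everything afterwards is formal. I would stress that this route is chosen precisely so as to avoid the far more subtle question of whether the category of $I$-cofinite modules is abelian: because the right-hand term $\Gamma_{(a)}(\Ext^j_R(M,N))$ is finitely generated, only closure of $I$-cofinite modules under \emph{extensions} is needed. Using instead the other natural spectral sequence, $\Ext^p_R(M,H^q_I(N))\Rightarrow H^{p+q}_I(M,N)$, would be less convenient: that one need not degenerate, and it would force one to prove that $\Ext^p_R(M,K)$ is $I$-cofinite whenever $K$ is, which does require the abelianness of the cofinite category.
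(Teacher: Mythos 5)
Your proof is correct, but it takes a genuinely different route from the paper's. The paper reduces to the case $\Gamma_I(M)=0$, picks $x\in I$ regular on $M$, and from the exact sequence $0\to M\xrightarrow{x}M\to M/xM\to 0$ derives
$0\to H^{i-1}_I(M,N)/xH^{i-1}_I(M,N)\to H^i_I(M/xM,N)\to (0:x)_{H^i_I(M,N)}\to 0$;
since $I\subseteq\ann_R(M/xM)$, the middle term is $\Ext^i_R(M/xM,N)$, which is finitely generated, so both $(0:x)_{H^i_I(M,N)}$ and $H^i_I(M,N)/xH^i_I(M,N)$ are finitely generated and Melkersson's criterion (Lemma \ref{tccofi}) finishes. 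That argument is entirely self-contained: Kawasaki's theorem on $H^j_I(N)$ then drops out as Corollary \ref{mrkaw1}. You instead \emph{invoke} Kawasaki's theorem as an input, feeding it into the spectral sequence $E_2^{p,q}=H^p_I(\Ext^q_R(M,N))\Rightarrow H^{p+q}_I(M,N)$, which for principal $I$ is concentrated in the two columns $p=0,1$ and degenerates, yielding the short exact sequences $0\to H^1_I(\Ext^{j-1}_R(M,N))\to H^j_I(M,N)\to\Gamma_I(\Ext^j_R(M,N))\to 0$; the outer terms are $I$-cofinite (Kawasaki, resp.\ finitely generated with support in $V(I)$), and $I$-cofiniteness is closed under extensions by the five-term trick on $\Ext^\bullet_R(R/I,-)$. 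This is an elegant alternative, and you are right that using the right-hand term's finite generation sidesteps any appeal to abelianness of the cofinite category. One point worth making explicit, since it is the ``delicate'' step you flag: the existence of this particular Grothendieck spectral sequence requires $\Hom_R(M,E)$ to be $\Gamma_I$-acyclic for every injective $E$, which is not the composition usually quoted in the generalized local cohomology literature (that one is $\Ext^p_R(M,H^q_I(N))\Rightarrow H^{p+q}_I(M,N)$, coming from $\Gamma_I(\Hom_R(M,N))=\Hom_R(M,\Gamma_I(N))$). The needed acyclicity does hold — since $M$ is finitely presented one has $\check C(a)\otimes\Hom_R(M,E)\cong\Hom_R(M,\check C(a)\otimes E)$, and $\check C(a)\otimes E$ is a split complex of injectives with $H^0=\Gamma_I(E)$ — and your alternative \v Cech/double-complex description is exactly the right way to make this rigorous without hunting for a reference. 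In sum: correct, but it proves the theorem by reduction to Kawasaki rather than, as the paper does, reproving and subsuming Kawasaki.
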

 \noindent As an immediate consequence of this theorem, we obtain again a theorem of K. I. Kawasaki \cite[Theorem 1]{Kaw1} (see Corollary \ref{mrkaw1}). Moreover, Theorem \ref{dlc} is an improvement of \cite[Theorem 2.8]{DS}, since we do not need the hypothesis that $M$ has finite projective dimension as in \cite{DS}.  It should be noticed that the arguments of local cohomology that used in the proof  of K. I. Kawasaki \cite{Kaw1} can not apply  to proving Theorem \ref{dlc}. Because, for the case of local cohomology, if $I$ is a principal ideal then $H^j_I(N)=0$ for all $j>1$. But this does not happen in the theory of generalized local cohomology, i.e. $H^j_I(M,N)$ may not vanish for $j>1$ even if $I$ is principal ideal. Therefore,  we  have to use a criterion on the cofiniteness which was invented by L. Melkersson in \cite{Me1}.  Here we also give a more elementary proof for this criterion (see Lemma \ref{tccofi}). The next theorem is our second main result in this paper.

\begin{theorem}\label{Thm1} Let $t$ be a non-negative integer such that $\dim\Supp(H^j_I(M,N))\le 1$ for all $j<t$. Then 
$H^j_I(M,N)$ is $I$-cofinite for all $j<t$ and $\Hom(R/I,H^t_I(M,N))$ is finitely generated.
\end{theorem}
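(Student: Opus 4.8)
The plan is to induct on $t$, the base case $t=0$ being vacuous. For the inductive step I would first reduce to a statement about a single cohomology module. The key observation is that $\dim\Supp(H^j_I(M,N))\le 1$ for $j<t$, so by the already-proved case $t-1$ (or by downward bookkeeping) it suffices to show: if $H^j_I(M,N)$ is $I$-cofinite for all $j<t-1$ and $\dim\Supp(H^{t-1}_I(M,N))\le 1$, then $H^{t-1}_I(M,N)$ is $I$-cofinite and $\Hom(R/I,H^t_I(M,N))$ is finitely generated. To get at $H^{t-1}_I(M,N)$ I would use a standard device for generalized local cohomology: pick a short exact sequence $0\to N'\to F\to N\to 0$ or, dually, choose an $M$-regular element or a prime filtration, to set up a long exact sequence relating $H^j_I(M,N)$ to generalized local cohomology of a module of smaller dimension or to ordinary local cohomology, so that the Bahmanpour--Naghipour result \cite[Theorem 2.6]{BaNa} and the Delfino--Marley/Yoshida dimension-one theory can be brought in. Concretely, using the spectral sequence $\Ext^p_R(M,H^q_I(N))\Rightarrow H^{p+q}_I(M,N)$ together with induction on $\pd$ or on the number of generators of $M$ reduces many assertions to the classical module $H^q_I(N)$.

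The heart of the argument is the cofiniteness criterion. Having $\dim\Supp(H^j_I(M,N))\le 1$ in hand, I would invoke the category-of-cofinite-modules machinery: the class of $I$-cofinite modules $K$ with $\dim\Supp(K)\le 1$ is an abelian subcategory of the category of $R$-modules (this is essentially the one-dimensional case underlying \cite{BaNa}), so kernels, cokernels and extensions of such modules stay $I$-cofinite. Thus if in a long exact sequence $\cdots\to A\to H^{t-1}_I(M,N)\to B\to\cdots$ the neighbouring terms $A,B$ are $I$-cofinite with support of dimension $\le 1$, then so is $H^{t-1}_I(M,N)$. To run this I would reduce modulo $H^0_I(N)$ (which is $I$-cofinite and Noetherian-ish by the finiteness of associated primes) to assume $\depth N>0$, choose $x\in I$ that is $N$-regular, and use the exact sequence coming from $0\to N\xrightarrow{x} N\to N/xN\to 0$: this gives $H^{j}_I(M,N/xN)$ sitting between $H^j_I(M,N)$ and $H^{j+1}_I(M,N)$, where $N/xN$ has smaller dimension, so induction on $\dim N$ feeds the hypotheses of the previous paragraph.

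For the last assertion — finite generation of $\Hom(R/I,H^t_I(M,N))$ — I would exploit that $(0:_{H^t_I(M,N)}I)$ is a submodule of $H^t_I(M,N)$ whose support lies in $V(I)$, and show it is finitely generated by comparing with $\Ext^t_R(M/IM, N/\text{(something)})$ or with the corresponding $\Hom$ for the ordinary module $H^t_I(N)$ via the spectral sequence, where \cite[Theorem 2.6]{BaNa} already gives the finite generation in the classical case; one then transfers it across the finite filtration induced by a prime filtration of $M$, using that at each step one only introduces $I$-cofinite kernels/cokernels coming from the lower $H^j$.

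The main obstacle I anticipate is controlling the passage from the classical local cohomology $H^q_I(N)$ to the generalized module $H^{p+q}_I(M,N)$: the spectral sequence $\Ext^p_R(M,H^q_I(N))\Rightarrow H^{p+q}_I(M,N)$ has many nonzero terms even when $I$ is far from principal, and $\Ext^p_R(M,-)$ need not preserve $I$-cofiniteness in general, so one must check that $\Ext^p_R(M/I^nM, H^q_I(N))$-type terms retain support of dimension $\le 1$ and remain in the good abelian subcategory. Keeping track of which $E_2$-terms can contribute to $H^{<t}$ versus $H^t$, and ensuring the edge maps at the $E_\infty$-page only splice together modules already known to be $I$-cofinite, will be the delicate bookkeeping; after that, Melkersson's criterion (Lemma \ref{tccofi}) and the one-dimensional abelian-category fact close the argument.
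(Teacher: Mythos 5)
Your plan diverges from the paper's proof and, more importantly, contains a gap that you yourself flag in the last paragraph but do not resolve. The central difficulty with using the spectral sequence $\Ext^p_R(M,H^q_I(N))\Rightarrow H^{p+q}_I(M,N)$ is exactly the one you anticipate: the hypothesis is a constraint on $\dim\Supp$ of the abutment $H^{j}_I(M,N)$ for $j<t$, and this does not propagate to the $E_2$-terms. Since $\Supp\bigl(\Ext^p_R(M,H^q_I(N))\bigr)\subseteq\Supp(M)\cap\Supp(H^q_I(N))$, nothing in the hypothesis controls $\dim\Supp(H^q_I(N))$ for the relevant $q$, so the $E_2$-terms need not lie in the good one-dimensional abelian subcategory and the Bahmanpour--Naghipour machinery cannot be applied to them termwise. "Careful bookkeeping" of which $E_2$-terms contribute does not fix this: even a single $E_2$-term with large support sitting off the edge can prevent the argument from closing. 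So as written this is not a proof but a program with an acknowledged open step.

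A second, quieter gap: you propose to "reduce modulo $H^0_I(N)$ \ldots\ to assume $\depth N>0$," but for generalized local cohomology this reduction is not automatic. As the paper emphasizes in the introduction, $H^j_I(M,N)\not\cong H^j_I(M,N/\Gamma_{I_M}(N))$ for $j>0$ in general, so killing the torsion submodule of $N$ requires justification. The paper supplies this via Lemma \ref{kte}, showing that the dimension bound on supports is preserved when one passes from $N$ to $N/\Gamma_{I_M}(N)$; your sketch does not address this.

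Finally, the route the paper actually takes is quite different from yours. Rather than spectral sequences and the abelian-category result for one-dimensional cofinite modules, the authors first use Lemma \ref{BdMJ} to trade $I$-cofiniteness for $I_M$-cofiniteness, reduce to $\Gamma_{I_M}(N)=0$ via Lemma \ref{kte}, and then make a very deliberate choice of $x\in I_M$ avoiding a finite set of primes coming from attached primes of the localized Artinian modules $H^j_{IR_{\p_k}}(M_{\p_k},N_{\p_k})$ (Lemma \ref{TCA}). This choice forces the quotients $L_j=H^j_I(M,N)/xH^j_I(M,N)$ to be minimax, after which Melkersson's criterion and Lemma \ref{tccofi} finish the induction. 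The minimax/attached-prime bookkeeping is the real engine of the proof, and it is what substitutes for the unavailable spectral-sequence control you were hoping for.
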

\medskip

\noindent This theorem is an extension for  generalized local cohomology modules of a result of K. Bahmanpour and R. Naghipour
 \cite[Theorem 2.6]{BaNa}. In \cite{BaNa}, they had used a basic property of local cohomology that $H^j_I(N)\cong H^j_I(N/\Gamma_I(N))$ for all $j>0$; then it is easy to reduce to the case of $\Gamma_I(N)=0$. But, it is not true that $H^j_I(M,N)\cong H^j_I(M,N/\Gamma_{I_M}(N))$ for all $j>0$ in general, where $I_M=\ann_R(M/IM)$. Hence, we need to establish Lemma \ref{kte} which says that if $t$ and $k$ are non-negative integers such that $\dim\Supp(H^j_I(M,N))\le k$ for all $j<t$ then so is $H^j_I(M,N/\Gamma_{I_M}(N))$. Moreover, in order to prove Theorem \ref{Thm1}, we also need some more auxiliary lemmas such as \ref{Lm3}, \ref{Min}, \ref{ATn} on minimax modules. Especially, by Lemma \ref{BdMJ}, instead of studying the cofiniteness of $H^j_I(M,N)$, we need only to prove the cofiniteness of these modules with respect to $I_M$.  As a consequence of Theorem \ref{Thm1}, we prove that if $\dim\Supp(H^j_I(M,N))\le 1$ for all $j$ (this is the case, for example  if $\dim(N/I_MN)\le 1$) then $H^j_I(M,N)$ is $I$-cofinite for all $j$ (Corollary \ref{CQ1}). This is an improvement of \cite[Theorem 2.9]{DS}, because our theorem does not need the hypothesis that $R$ is complete local, $M$ is of finite projective dimension, and $I$ is prime ideal with $\dim(R/I)=1$. An other consequence of Theorem \ref{Thm1} on the finiteness of Bass numbers is Corollary \ref{sbn} which is a stronger result than the main result of S. Kawakami and K. I. Kawasaki in \cite{Kaw2}.
\medskip

\noindent On the other hand, in the case of small dimension, the third author in \cite[Lemma 3.1]{Hoa} proved that if $\dim(N)\le 2$ then any quotient of $H^j_I(M,N)$ has only finitely many associated prime ideals for all finitely generated $R-$modules $M$ and all $j\ge 0$. We can now prove  a stronger result in the following theorem. 

\begin{theorem}\label{Thm3} 
Assume that $\dim(M)\le 2$ or $\dim(N)\le 2$. Then $H^j_I(M,N)$ is $I$-cofinite for all $j$.
\end{theorem}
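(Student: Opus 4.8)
The plan is to reduce the case $\dim(M)\le 2$ to the case $\dim(N)\le 2$, and then to handle the latter by dimension-shifting together with Theorem \ref{Thm1}. First observe that $\Supp(H^j_I(M,N))\subseteq\Supp(M)\cap\Supp(N)$, so if $\dim(M)\le 2$ the supports of all the generalized local cohomology modules have dimension at most $2$; by the change-of-rings reduction indicated in the excerpt (via $I_M=\ann_R(M/IM)$ and Lemma \ref{BdMJ}), and after passing to $R/\ann_R(M)$ if convenient, one may assume $\dim(N)\le 2$ as well. Thus it suffices to treat the single hypothesis $\dim(N)\le 2$.

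Assume now $\dim(N)\le 2$. If $\dim(N)\le 1$ then $\dim\Supp(H^j_I(M,N))\le 1$ for all $j$, and Theorem \ref{Thm1} (applied with $t$ arbitrarily large, i.e.\ Corollary \ref{CQ1}) already gives that every $H^j_I(M,N)$ is $I$-cofinite. So we may assume $\dim(N)=2$. Using the exact sequence $0\to\Gamma_{I_M}(N)\to N\to \bar N\to 0$ with $\bar N=N/\Gamma_{I_M}(N)$: since $\Gamma_{I_M}(N)$ is $I_M$-torsion and finitely generated it is itself $I$-cofinite, and the long exact sequence of generalized local cohomology together with Lemma \ref{kte} reduces us to the case $\Gamma_{I_M}(\bar N)=0$, i.e.\ we may assume $I_M$ contains a nonzerodivisor $x$ on $N$. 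Then multiplication by $x$ gives a short exact sequence $0\to N\xrightarrow{x} N\to N/xN\to 0$, where $\dim(N/xN)\le 1$. The induced long exact sequence reads
\[
\cdots\to H^{j-1}_I(M,N/xN)\to H^j_I(M,N)\xrightarrow{x} H^j_I(M,N)\to H^j_I(M,N/xN)\to\cdots,
\]
and since $x\in I_M\subseteq\rad(I+\ann_R(M))$ acts nilpotently on each $H^j_I(M,N)$ only in favorable situations — more precisely, $x$ annihilates $H^j_{I_M}$-type modules — one extracts short exact sequences
\[
0\to H^j_I(M,N)/xH^j_I(M,N)\to H^j_I(M,N/xN)\to (0:_{H^{j+1}_I(M,N)}x)\to 0 .
\]
Because $\dim(N/xN)\le 1$, every $H^j_I(M,N/xN)$ is $I$-cofinite by the previous paragraph, hence so are the submodule $H^j_I(M,N)/xH^j_I(M,N)$ and the quotient $(0:_{H^{j+1}_I(M,N)}x)$. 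Now apply the cofiniteness criterion Lemma \ref{tccofi}: for each $j$, the module $H^j_I(M,N)$ is $I_M$-torsion (so supported in $V(I_M)\subseteq V(I)$), and both $\Hom_R(R/(x),H^j_I(M,N))=(0:_{H^j_I(M,N)}x)$ and $H^j_I(M,N)/xH^j_I(M,N)$ are $I$-cofinite; since $I_M$ is, up to radical, generated together with $x$ in a controlled way, an induction on the number of generators of $I_M$ modulo $(x)$ — the principal-ideal base case being Theorem \ref{dlc} — yields that $H^j_I(M,N)$ is $I$-cofinite.

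The main obstacle is the passage from the $I$-cofiniteness of $H^j_I(M,N)/xH^j_I(M,N)$ and $(0:_{H^j_I(M,N)}x)$ back to that of $H^j_I(M,N)$ itself: $x$ is a single element, not a full generating set of $I$ (or of $I_M$), so Lemma \ref{tccofi} does not apply directly. This is where one genuinely needs the structural results flagged in the introduction — Lemmas \ref{Lm3}, \ref{Min}, \ref{ATn} on minimax modules and, crucially, Lemma \ref{BdMJ} allowing one to test $I$-cofiniteness against $I_M$ rather than $I$ — so that the two-dimensional support hypothesis can be leveraged to control $\Ass$ and the higher $\Ext$ modules. I expect the bookkeeping around which torsion functor ($\Gamma_I$ versus $\Gamma_{I_M}$) is in play, and verifying that the relevant modules are minimax, to be the technically delicate part; the dimension bound $\dim\le 2$ enters precisely to keep the supports one-dimensional after killing the nonzerodivisor $x$, so that Theorem \ref{Thm1} is applicable at the inductive step.
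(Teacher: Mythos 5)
Your proposal contains several genuine gaps and does not match the paper's (much simpler) argument.

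First, the attempted reduction of the case $\dim(M)\le 2$ to the case $\dim(N)\le 2$ ``by passing to $R/\ann_R(M)$'' is not justified and does not obviously work: while $H^j_I(M,N)$ is indeed an $R/\ann(M)$-module, $N$ is not, and $\Ext^j_R(M/I^nM,N)$ is not the same as an Ext computed over $R/\ann_R(M)$ against some modified $N$. The paper does not make such a reduction; it treats $\dim(M)\le 2$ and $\dim(N)\le 2$ separately, each by a short dimension-shifting argument that in the end feeds into Corollary~\ref{CQ1}.

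Second, for $\dim(N)\le 2$ you reduce modulo $\Gamma_{I_M}(N)$ and then choose a nonzerodivisor $x\in I_M$. But Lemma~\ref{tccofi} (the Melkersson criterion, as stated in the paper) requires $x\in I$, not merely $x\in I_M$. The paper avoids this by reducing modulo $\Gamma_I(N)$ instead, which allows one to choose an $N$-regular element $y\in I$. More importantly, you have misread Lemma~\ref{tccofi}: its hypothesis is only that $x$ be a single element of $I$, not a generator of $I$ (or of $I_M$). So the ``main obstacle'' you identify --- that $x$ does not generate the ideal --- is not an obstacle at all, and the proposed ``induction on the number of generators of $I_M$ modulo $(x)$'' is unneeded and is not made into a precise argument.

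Finally, you miss the clean finish that the paper uses and that makes Lemma~\ref{tccofi} unnecessary here. Once $\Gamma_I(N)=0$ and $y\in I$ is $N$-regular, the surjection $H^j_I(M,N/yN)\twoheadrightarrow (0:y)_{H^{j+1}_I(M,N)}$ gives $\dim\Supp((0:y)_{H^{j+1}_I(M,N)})\le \dim(N/yN)\le 1$. Since $H^{j+1}_I(M,N)$ is $I$-torsion and $y\in I$, one has $\Supp(H^{j+1}_I(M,N))=\Supp((0:y)_{H^{j+1}_I(M,N)})$, hence $\dim\Supp(H^j_I(M,N))\le 1$ for all $j$ (with $H^0_I(M,N)=\Hom(M,\Gamma_I(N))=0$). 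Now Corollary~\ref{CQ1} applies directly; no cofiniteness criterion for a single element, no appeal to minimax lemmas, and no induction on generators is needed. The same dimension-shifting argument, with an $M$-regular $x\in I$ after killing $\Gamma_I(M)$, handles the case $\dim(M)\le 2$.
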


\noindent As an immediate consequence of Theorem \ref{Thm3}, we get a result on the cofiniteness of local cohomology modules (see Corollary \ref{Dchl}). Moreover, by application of Theorems \ref{Thm1} and \ref{Thm3}, we obtain a finiteness result on the set of associated prime ideals of $\Ext^i_R(R/I,H^j_I(M,N))$ for all $i, j\ge 0$ when $(R,\m)$ is a Noetherian local ring and $\dim(M)\le 3$ or $\dim(N)\le 3$ (Corollary \ref{Pro1}). 
\medskip

The paper is divided into five sections. In Section 2, we prove some auxiliary lemmas which will be used in the sequel. Section 3, 4 and 5 are devoted to prove three main results and its consequences.

\section{Auxiliary lemmas}

Let $R$ be a commutative Noetherian ring, $I$ an ideal of $R$, and $M$, $N$ finitely generated $R$-modules. We always denote by $I_M$ the annihilator of $R$-module $M/IM$, i.e. $I_M=\ann_R(M/IM)$. We first recall the following lemma.

\begin{lemma}\label{Lmbs} {\rm (cf. \cite[Lemma 2.3]{CH} and \cite[Lemma 2.1]{CH1})} 
\item[\ (i)] If $I\subseteq\ann(M)$ or $\Gamma_I(N)=N$ then $H^j_I(M,N)\cong\Ext^j_R(M,N)$ for all $j\ge 0$.
\item[\ (ii)] $H^j_I(M,N)$ is $I_M$-torsion.
\end{lemma}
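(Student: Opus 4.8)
The plan is to prove the two parts separately; each is short.

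For part (i) I would dispose of the case $I\subseteq\ann(M)$ first: then $IM=0$, so $I^nM=0$ and $M/I^nM=M$ for every $n\ge1$, hence the direct system $\{\Ext^j_R(M/I^nM,N)\}_n$ defining $H^j_I(M,N)$ is constant with value $\Ext^j_R(M,N)$, and the claim is immediate. For the case $\Gamma_I(N)=N$ I would use that, since $R$ is Noetherian, $N$ admits an injective resolution $E^\bullet$ all of whose terms $E^i$ are $I$-torsion; then $\Ext^j_R(L,N)=H^j(\Hom_R(L,E^\bullet))$ for every $R$-module $L$, in particular for $L=M/I^nM$. The point is that for finitely generated $M$ and $I$-torsion $E^i$ every $\varphi\colon M\to E^i$ has finitely generated $I$-torsion image, so $\varphi(I^nM)=I^n\varphi(M)=0$ for $n\gg0$ and $\varphi$ factors through $M/I^nM$; hence $\varinjlim_n\Hom_R(M/I^nM,E^i)=\Hom_R(M,E^i)$ for each $i$. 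Since filtered colimits are exact and therefore commute with cohomology, passing to $\varinjlim_n$ turns $H^j(\Hom_R(M/I^nM,E^\bullet))$ into $H^j(\Hom_R(M,E^\bullet))=\Ext^j_R(M,N)$, i.e. $H^j_I(M,N)\cong\Ext^j_R(M,N)$.

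For part (ii) the key elementary observation I would use is $I_MM\subseteq IM$, which is just the definition $I_M=\ann_R(M/IM)$; iterating gives $I_M^nM\subseteq I^nM$ for all $n\ge1$ (indeed $I_M^nM=I_M^{n-1}(I_MM)\subseteq I_M^{n-1}(IM)=I(I_M^{n-1}M)\subseteq I(I^{n-1}M)=I^nM$). Therefore $I_M^n\subseteq\ann_R(M/I^nM)$, and since $\Ext^j_R(L,-)$ is annihilated by $\ann_R(L)$, the module $\Ext^j_R(M/I^nM,N)$ is annihilated by $I_M^n$. Thus every term of the direct system defining $H^j_I(M,N)$ is $I_M$-torsion, and as any element of $H^j_I(M,N)$ is the image of an element of some $\Ext^j_R(M/I^nM,N)$, the module $H^j_I(M,N)$ is $I_M$-torsion.

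I do not expect a genuine obstacle here; the only slightly delicate point is the input to part (i) that an $I$-torsion module over a Noetherian ring has an $I$-torsion injective resolution, together with the colimit-versus-$\Hom$ computation, both standard. As an alternative route for the case $\Gamma_I(N)=N$ one could invoke the Herzog spectral sequence $\Ext^p_R(M,H^q_I(N))\Rightarrow H^{p+q}_I(M,N)$, which collapses to the edge isomorphisms $\Ext^j_R(M,N)\cong H^j_I(M,N)$ once $H^q_I(N)=0$ for all $q>0$; but the direct argument keeps the proof self-contained.
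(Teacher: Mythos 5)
Your proof is correct. The paper does not actually prove this lemma -- it only recalls it with a citation to [CH, Lemma 2.3] and [CH1, Lemma 2.1] -- so there is no in-text argument to compare against; but your reasoning is the standard one and is sound in all three steps: the constancy of the direct system when $I\subseteq\ann(M)$, the factorization of maps from $M$ into an $I$-torsion injective resolution of $N$ through some $M/I^nM$ combined with the exactness of filtered colimits, and the containment $I_M^nM\subseteq I^nM$ forcing each $\Ext^j_R(M/I^nM,N)$ to be killed by $I_M^n$.
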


We next prove some auxiliary lemmas which will be used in sequel.

\begin{lemma}\label{kte} Let $t$ and $k$ be non-negative integers. If $\dim\Supp(H^j_I(M,N))\le k$ for all $j<t$, then so is $H^j_I(M,N/\Gamma_{I_M}(N))$.
\end{lemma}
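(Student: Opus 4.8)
The plan is to exploit the short exact sequence $0\to L\to N\to\overline N\to 0$, where $L:=\Gamma_{I_M}(N)$ and $\overline N:=N/L$, and to reduce the whole assertion to the single estimate $\dim\Supp L\le k$ (needed only when $t\ge 1$; for $t=0$ there is nothing to prove). Since $I\subseteq I_M=\ann_R(M/IM)$ we have $V(I_M)\subseteq V(I)$, so the finitely generated module $L$ is supported inside $V(I)$ and is therefore $I$-torsion; by Lemma \ref{Lmbs}(i), $H^i_I(M,L)\cong\Ext^i_R(M,L)$, whence $\Supp H^i_I(M,L)\subseteq\Supp L$ for every $i\ge 0$. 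Applying the long exact sequence of generalized local cohomology to the short exact sequence gives, for each $j$, the exact piece
\[
H^j_I(M,N)\longrightarrow H^j_I(M,\overline N)\longrightarrow H^{j+1}_I(M,L),
\]
so that $\Supp H^j_I(M,\overline N)\subseteq\Supp H^j_I(M,N)\cup\Supp L$. Consequently, once $\dim\Supp L\le k$ is established, the hypothesis $\dim\Supp H^j_I(M,N)\le k$ for $j<t$ gives at once $\dim\Supp H^j_I(M,\overline N)\le k$ for $j<t$, which is the claim.

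It remains to prove $\dim\Supp L\le k$ when $t\ge 1$. In that case $\dim\Supp H^0_I(M,N)\le k$ by hypothesis, and $H^0_I(M,N)$ is finitely generated (being a submodule of $\Hom_R(M,N)$), so $\Supp H^0_I(M,N)$ is closed. Hence it suffices to show $\Ass L\subseteq\Supp H^0_I(M,N)$, because then $\Supp L=\bigcup_{\p\in\Ass L}V(\p)\subseteq\Supp H^0_I(M,N)$. So fix $\p\in\Ass L$. Since $L=\Gamma_{I_M}(N)$ we have $\Ass L=\Ass N\cap V(I_M)$, so $\p\in\Ass N$ and $\p\supseteq I_M\supseteq I$; in particular $I$ annihilates $R/\p$, and $\p\in\Supp M$ because $\ann_R M\subseteq I_M\subseteq\p$. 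The inclusion $R/\p\hookrightarrow N$ induces, for each $n$, an injection $\Hom_R(M/I^nM,R/\p)\hookrightarrow\Hom_R(M/I^nM,N)$, and $\Hom_R(M/I^nM,R/\p)\cong\Hom_R(M/IM,R/\p)$ because $I$ kills $R/\p$; taking the direct limit over $n$ gives an injection $\Hom_R(M/IM,R/\p)\hookrightarrow H^0_I(M,N)$. Finally $(M/IM)_\p\ne 0$ (as $\p\in V(I_M)=\Supp(M/IM)$) and is finitely generated over $R_\p$, so $\Hom_R(M/IM,R/\p)_\p=\Hom_{R_\p}((M/IM)_\p,\kappa(\p))\ne 0$; thus $\p\in\Supp\Hom_R(M/IM,R/\p)\subseteq\Supp H^0_I(M,N)$, as needed.

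The step I expect to be the crux is this inclusion $\Ass N\cap V(I_M)\subseteq\Supp H^0_I(M,N)$. The naive alternative --- inducting on $t$ using only the long exact sequence --- is circular, since in $H^{t-1}_I(M,N)\to H^{t-1}_I(M,\overline N)\to H^t_I(M,L)\to H^t_I(M,N)$ the term $H^t_I(M,L)$ is not controlled by the modules $H^j_I(M,N)$ with $j<t$ through the connecting maps alone. What makes everything work is that the \emph{whole} family $\{H^i_I(M,L)\}_{i\ge 0}$ is supported on the single closed set $\Supp L$, and that this set is already pinned down dimensionally by $H^0_I(M,N)$; the rest is formal.
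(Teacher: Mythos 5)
Your proof is correct, and it takes a genuinely different route from the paper's. The paper argues by contradiction: it picks a minimal index $i<t$ and a prime $\p$ with $\dim(R/\p)>k$ lying in $\Supp H^i_I(M,\overline N)$, localizes the long exact sequence at $\p$, deduces that $\Ext^j_R(M,\Gamma_{I_M}(N))_\p$ first becomes nonzero at $j=i+1\ge 1$, and concludes $\depth(\ann(M)_\p,\Gamma_{I_M}(N)_\p)\ge 1$ --- which contradicts the fact that every associated prime of $\Gamma_{I_M}(N)_\p$ lies over $(I_M)_\p\supseteq\ann(M)_\p$ (so the depth is actually $0$). Your argument isolates the same underlying phenomenon (namely that $L_\p\ne 0$ forces $\Hom(M,L)_\p\ne 0$), but packages it as a single upfront estimate: $\Supp L\subseteq\Supp H^0_I(M,N)$, proved via the injection $\Hom_R(M/IM,R/\p)\hookrightarrow H^0_I(M,N)$ for each $\p\in\Ass L$, and then a formal chase through the long exact sequence using $\Supp H^i_I(M,L)\subseteq\Supp L$ (valid by Lemma \ref{Lmbs}(i) since $L$ is $I$-torsion). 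What your version buys is modularity and a stronger byproduct: you actually get $\Supp H^j_I(M,\overline N)\subseteq\Supp H^j_I(M,N)\cup\Supp H^0_I(M,N)$ for \emph{every} $j\ge 0$, whereas the paper's contradiction scheme only addresses $j<t$. Your version also avoids the depth/Rees-theorem machinery in favor of an elementary $\Hom$ computation. One small simplification you could note: the same injection argument, run for all $n$ at once, shows $H^0_I(M,N)\cong\Hom_R(M,\Gamma_{I_M}(N))$, which makes the inclusion $\Ass L\subseteq\Supp H^0_I(M,N)$ essentially immediate.
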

\begin{proof} From the short exact sequence $0\sr\Gamma_{I_M}(N)\sr N\sr N/\Gamma_{I_M}(N)\sr 0,$ 
we get the long exact sequence
{\small $$...\sr\Ext^j_R(M,\Gamma_{I_M}(N))\sr H^j_I(M,N)\sr H^j_I(M,\overline N)\sr\Ext^{j+1}_R(M,\Gamma_{I_M}(N))\sr...,$$}
for all $j$, where $\overline N= N/\Gamma_{I_M}(N)$. 
We assume that there exists an integer $i<t$ and $\p\in\Supp(H^i_I(M,\overline N))$ such that $\dim(R/\p)>k$ and $\p\notin\Supp(H^j_I(M,\overline N))$ for all $j<i$. Thus by the long exact sequence as above we obtain the following exact sequence
{\small $$...\sr\Ext^j_R(M,\Gamma_{I_M}(N))_\p\sr H^j_I(M,N)_\p\sr H^j_I(M,\overline N)_\p\sr\Ext^{j+1}_R(M,\Gamma_{I_M}(N))_\p\sr...$$}
Note that $H^j_I(M,N)_\p=0$ for all $j\le i$, while $H^j_I(M,\overline N)_\p=0$ for all $j<i$, and $H^i_I(M,\overline N)_\p\ne 0$. So, by the above exact sequence, we have $\Ext^j_R(M,\Gamma_{I_M}(N))_\p=0$ for all $j\le i$, and $\Ext^{i+1}_R(M,\Gamma_{I_M}(N))_\p\ne 0$. It implies that $\Gamma_{I_M}(N)_\p\ne 0$ and  $$\depth(\ann(M)_\p,\Gamma_{I_M}(N)_\p)=i+1\ge 1.$$ Hence $\ann(M)_\p\nsubseteq\q R_\p$ for all $\q R_\p\in\Ass_{R_\p}(\Gamma_{I_M}(N)_\p)$. This contradicts with the fact that $\Ass_{R_\p}(\Gamma_{I_M}(N)_\p)=\Ass_{R_\p}(N_\p)\cap V((I_M)_\p)$ and $I_M\supseteq \ann(M)$. 
\end{proof}

In \cite{Zo}, H. Z${\rm\ddot o}$schinger had introduced the class of minimax modules. An $R$-module $K$ is said to be a
{\it minimax module}, if there is a finitely generated submodule $T$ of $K$, such that $K/T$ is Artinian. Thus the class
of minimax modules includes all finitely generated and all Artinian modules.

\begin{lemma}\label{Lm3}  Let $t$ be a non-negative integer such that $H^j_I(M,N)$ is $I$-cofinite minimax for all $j<t$. Then $\Hom_R(R/I,H^t_I(M,N))$ is finitely generated. In particular, $\Ass(H^t_I(M,N))$ is a finite set.
\end{lemma}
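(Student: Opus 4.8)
The plan is to use the standard spectral sequence relating generalized local cohomology to Ext and ordinary local cohomology. Since $M/I^nM$ has finite projective dimension over $R/I^nM$... no, better: recall the Grothendieck spectral sequence
$$E_2^{p,q}=\Ext^p_R(M,H^q_I(N))\Rightarrow H^{p+q}_I(M,N).$$
Actually, the cleanest tool here is the following: by Lemma \ref{Lmbs}(ii) each $H^j_I(M,N)$ is $I_M$-torsion, but more importantly, since we want to extract information about $\Hom_R(R/I,H^t_I(M,N))$ from the lower cohomologies, I would prove this by induction on $t$ together with a dimension-shifting / long exact sequence argument. The base case $t=0$ is trivial since $\Hom_R(R/I,H^0_I(M,N))\subseteq H^0_I(M,N)$, and $H^0_I(M,N)=\Hom_R(M,\Gamma_{I_M}(N))$ is a submodule of the finitely generated module $\Hom_R(M,N)$... wait, that gives more than needed; let me instead keep the induction hypothesis available.

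First I would break $N$ using the short exact sequence $0\to \Gamma_{I_M}(N)\to N\to \overline N\to 0$ with $\overline N=N/\Gamma_{I_M}(N)$, giving the long exact sequence
$$\cdots\to\Ext^j_R(M,\Gamma_{I_M}(N))\to H^j_I(M,N)\to H^j_I(M,\overline N)\to\Ext^{j+1}_R(M,\Gamma_{I_M}(N))\to\cdots.$$
Since $\Ext^j_R(M,\Gamma_{I_M}(N))$ is a finitely generated $R/I_M$-module (hence $I$-cofinite, as $\Gamma_{I_M}(N)$ is annihilated by a power of $I_M\supseteq\ann M$, and $\sqrt{I_M}=\sqrt{I+\ann M}\supseteq\sqrt I$... in fact it is killed by a power of $I_M$ so its support lies in $V(I_M)\subseteq V(I)$, and being finitely generated it is $I$-cofinite), and $I$-cofinite minimax modules form a Serre-type class closed under the relevant operations, I can transfer the hypothesis to $\overline N$: $H^j_I(M,\overline N)$ is $I$-cofinite minimax for all $j<t$ (using that an extension and a submodule/quotient of $I$-cofinite minimax modules in a short exact sequence is again such — this uses Melkersson-type arguments, cf. the criterion in Lemma \ref{tccofi}). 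Now $\depth(\ann(M),\overline N)\ge 1$ in a suitable localized sense, so I can pick $x\in I$... actually $x\in\ann(M)$ is not available; instead use that $\Gamma_{I_M}(\overline N)=0$, so $I_M$ contains a non-zerodivisor on $\overline N$, say $x$; but I need $x$ acting on the cohomology.

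The key reduction: multiplication by $x\in I_M$ that is a non-zerodivisor on $\overline N$ gives $0\to\overline N\xrightarrow{x}\overline N\to \overline N/x\overline N\to 0$, hence long exact sequence
$$H^{t-1}_I(M,\overline N/x\overline N)\to H^t_I(M,\overline N)\xrightarrow{x} H^t_I(M,\overline N).$$
Since $x\in I$, on $\Hom_R(R/I,H^t_I(M,\overline N))$ multiplication by $x$ is zero, so $\Hom_R(R/I,H^t_I(M,\overline N))$ embeds into the cokernel of the map $H^{t-1}_I(M,\overline N/x\overline N)\to H^t_I(M,\overline N)$... more precisely into a quotient of $H^{t-1}_I(M,\overline N/x\overline N)$. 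The spectral sequence or a comparison argument shows $H^j_I(M,\overline N/x\overline N)$ relates to $H^j_I(M,\overline N)$ and $H^{j+1}_I(M,\overline N)$ for $j<t$ via the same long exact sequence, so these are $I$-cofinite minimax for $j\le t-1$ by the inductive hypothesis applied with the pair $(M,\overline N/x\overline N)$ — here I need that the hypothesis "$H^j_I(M,-)$ is $I$-cofinite minimax for $j<t$" is inherited by $\overline N/x\overline N$, which again follows from the long exact sequence above since each term $H^j_I(M,\overline N)$ is $I$-cofinite minimax for $j<t$ and $H^t$ enters only at the top. Then $\Hom_R(R/I,H^t_I(M,N))$ is a submodule of a module that sits in an exact sequence with $\Hom$ and $\Ext^1$ of $R/I$ against $I$-cofinite minimax modules, hence is finitely generated; and $\Ass(H^t_I(M,N))=\Ass\Hom_R(R/I,H^t_I(M,N))$ (since $H^t_I(M,N)$ is $I_M$-torsion by Lemma \ref{Lmbs}(ii), so every associated prime contains $I_M\supseteq I$) is finite.

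The main obstacle I expect is the bookkeeping around which modules are "$I$-cofinite minimax" — one must verify that this class is closed under the operations used (kernels, cokernels, extensions appearing in the long exact sequences, and passing between $H^j_I(M,\overline N)$ and $H^j_I(M,\overline N/x\overline N)$), and in particular handle the subtlety that $\overline N$ may not satisfy $H^j_I(M,\overline N)=H^j_I(M,N)$. This is exactly why Lemma \ref{kte} and the auxiliary minimax lemmas are set up in the paper; I would lean on Melkersson's cofiniteness criterion (Lemma \ref{tccofi}) at each step where I claim an $I$-cofinite module, checking only that $\Hom_R(R/I,-)$ and $\Ext^1_R(R/I,-)$ are finitely generated.
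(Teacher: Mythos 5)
The overall architecture of your argument — induct on $t$, split off the torsion part of $N$ via a short exact sequence, observe the $\Ext$ terms are finitely generated so the reduction is harmless, then take an $N$-regular element $x$ in the ideal and chase the long exact sequence for $0\to N\xrightarrow{x}N\to N/xN\to0$ — is exactly the paper's strategy. But there is a genuine gap in the middle that your own hedging ("but I need $x$ acting on the cohomology") already flags and then silently papers over.

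You reduce by killing $\Gamma_{I_M}(N)$, and you correctly conclude that some $x\in I_M$ is $\overline N$-regular. But a line later you write "Since $x\in I$, on $\Hom_R(R/I,H^t_I(M,\overline N))$ multiplication by $x$ is zero." You have no warrant for $x\in I$; you only have $x\in I_M=\ann_R(M/IM)$, which is strictly larger than $I$ in general, and an element of $I_M$ need not kill $\Hom_R(R/I,-)$. The fix — and it is what the paper does — is to kill $\Gamma_I(N)$ rather than $\Gamma_{I_M}(N)$. Then $\Gamma_I(\overline N)=0$ forces $I\nsubseteq\mathfrak p$ for every $\mathfrak p\in\Ass(\overline N)$, so prime avoidance produces an $N$-regular $x$ that genuinely lies in $I$. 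The reduction step is just as harmless with $\Gamma_I(N)$ as with $\Gamma_{I_M}(N)$: $\Gamma_I(N)$ is finitely generated and $I$-torsion, so each $\Ext^j_R(M,\Gamma_I(N))$ is a finitely generated module supported in $V(I)$, and the transfer of "$I$-cofinite minimax" and of "$\Hom_R(R/I,-)$ finitely generated" across the long exact sequence goes through. (An alternative patch would be to phrase everything in terms of $I_M$ and invoke the equivalence of $I$- and $I_M$-cofiniteness, but that requires Lemma \ref{BdMJ}, which you do not invoke and which is set up in the paper only for the proof of Theorem \ref{Thm1}.)

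A second, smaller imprecision: at the decisive step you need $\Ext^1_R(R/I,\,H^{t-1}_I(M,N)/xH^{t-1}_I(M,N))$ to be finitely generated, and you wave at "Melkersson-type arguments" and Lemma \ref{tccofi}. Lemma \ref{tccofi} is a criterion for proving a module is $I$-cofinite from data about $(0:x)_K$ and $K/xK$; it does not by itself give finiteness of $\Ext^1$ of a quotient of a cofinite minimax module. What is actually needed here is the specific result of Melkersson (\cite[Corollary~4.4]{Me1}) that if $K$ is an $I$-cofinite minimax module and $x\in I$, then $K/xK$ is $I$-cofinite (indeed of finite length modulo a finitely generated piece), hence $\Ext^1_R(R/I,K/xK)$ is finitely generated. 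You should cite that explicitly rather than conflate it with the lemma you mention.

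\end{document}
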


\begin{proof} We prove by induction on $t\ge 0$. If $t=0$ then the result is trivial. Assume that $t>0$ and the result holds true for $t-1$. From the short exact sequence $0\sr\Gamma_I(N)\sr N\sr \overline N\sr 0$, we get the long exact sequence 
$$\Ext^j_R(M,\Gamma_I(N))\xrightarrow{f_j} H^j_I(M,N)\xrightarrow{g_j} H^j_I(M,\overline N)\xrightarrow{h_j}\Ext^{j+1}_R(M,\Gamma_I(N)),$$
where $\overline N=N/\Gamma_I(N)$. For each $j\ge 0$ we split the above exact sequence into two the following exact sequences
$$0\sr\Im f_j\sr H^j_I(M,N)\sr\Im g_j\sr 0\quad\text{ and }$$
$$0\sr\Im g_j\sr H^j_I(M,\overline N)\sr\Im h_j\sr 0.$$
Note that $\Im f_j$ and $\Im h_j$ is finitely generated for all $j\ge 0$. Then, $H^j_I(M,N)$ is $I$-cofinite if and only if $\Im g_j$ is $I$-cofinite if and only if $H^j_I(M,\overline N)$ is $I$-cofinite for all $j\ge 0$; and we get by \cite[Lemma 2.1]{Bak} that $H^j_I(M,N)$ is minimax if and only if so is $H^j_I(M,\overline N)$ for all $j\ge 0$. Hence $H^j_I(M,N)$ is $I$-cofinite minimax if and only if so is $H^j_I(M,\overline N)$ for all $j\ge 0$. Moreover, from the two exact sequences above, we get the following exact sequences
$$
\begin{aligned}
\Hom(R/I,\Im f_j)\sr\Hom(R/I,&H^j_I(M,N))\\
\sr&\Hom(R/I,\Im g_j)\sr\Ext^1_R(R/I,\Im f_j)\text{ \quad and }
\end{aligned}
$$
$$
\begin{aligned}
0\sr\Hom(R/I,\Im g_j)\sr\Hom(R/I,H^j_I(M,\overline N))\sr\Hom(R/I,\Im h_j)
\end{aligned}
$$
for all $j\ge 0$. Thus we obtain that $\Hom(R/I,H^j_I(M,N))$ is finitely generated if and only if so is $\Hom(R/I,H^j_I(M,\overline N))$ for all $j\ge 0$. Therefore, in order to prove the lemma, we may assume that $\Gamma_I(N)=0$. Thus, there exists $x\in I$ such that $0\sr N\xrightarrow{x} N\sr N/xN\sr 0$ is exact. From this, we obtain the short exact sequence 
$$0\sr H^{t-1}_I(M,N)/xH^{t-1}_I(M,N)\sr H^{t-1}_I(M,N/xN)\sr (0:x)_{H^t_I(M,N)}\sr 0$$
Hence we get the following exact sequence
$$
\begin{aligned}
\Hom(R/I,H^{t-1}_I(M,N/xN))\sr&\Hom(R/I,(0:x)_{H^t_I(M,N)})\\
\sr&\Ext^1_R(R/I,H^{t-1}_I(M,N)/xH^{t-1}_I(M,N)).
\end{aligned}
$$
By the inductive hypothesis, $\Hom(R/I,H^{t-1}_I(M,N/xN))$ is finitely generated. Moreover, by the assumption, $H^{t-1}_I(M,N)$ is $I$-cofinite minimax. It implies by \cite[Corollary 4.4]{Me1} that $\Ext^1_R(R/I,H^{t-1}_I(M,N)/xH^{t-1}_I(M,N))$ is finitely generated. Therefore, $\Hom(R/I,H^t_I(M,N))\cong\Hom(R/I,(0:x)_{H^t_I(M,N)})$ is finitely generated as required.
\end{proof}

From Lemma \ref{Lm3}, we get a short proof for the following result which was proved first by Yassemi-Khatami-Sharif in \cite[Theorem 2.1]{Ya2}.

\begin{corollary}\label{YaKh} If $H^0_I(M,N), H^1_I(M,N),\ldots, H^{t-1}_I(M,N)$ is finitely generated then $\Ass(H^t_I(M,N)/T)$ is a finite set for any finitely generated submodule $T$ of $H^t_I(M,N)$.
\end{corollary}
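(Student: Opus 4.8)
The plan is to derive this directly from Lemma~\ref{Lm3}. First observe that for every $j<t$ the module $H^j_I(M,N)$ is, by hypothesis, finitely generated, hence minimax; moreover it is $I_M$-torsion by Lemma~\ref{Lmbs}(ii), and since $I\sb I_M$ this gives $\Supp_R(H^j_I(M,N))\sb V(I_M)\sb V(I)$, so $H^j_I(M,N)$ is in fact $I$-cofinite (finite generation makes $\Ext^k_R(R/I,-)$ automatically finitely generated). Therefore the hypotheses of Lemma~\ref{Lm3} are met, and we obtain that $\Hom_R(R/I,H^t_I(M,N))$ is finitely generated.

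Next, fix a finitely generated submodule $T$ of $H^t_I(M,N)$ and set $L=H^t_I(M,N)/T$. Applying $\Hom_R(R/I,-)$ to the short exact sequence $0\sr T\sr H^t_I(M,N)\sr L\sr 0$ produces the exact sequence
$$\Hom_R(R/I,H^t_I(M,N))\sr\Hom_R(R/I,L)\sr\Ext^1_R(R/I,T),$$
whose outer terms are finitely generated --- the first by the previous step, the second because $T$ is finitely generated --- so $\Hom_R(R/I,L)$ is finitely generated as well.

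Finally, $L$ is $I_M$-torsion, being a quotient of the $I_M$-torsion module $H^t_I(M,N)$, so every $\p\in\Ass_R(L)$ contains $I_M$, hence $I$; writing $\p=\ann_R(x)$ with $x\in L$ we then have $Ix=0$, i.e. $x\in(0:_L I)=\Hom_R(R/I,L)$, and $\p=\ann_R(x)\in\Ass_R(\Hom_R(R/I,L))$. Thus $\Ass_R(H^t_I(M,N)/T)\sb\Ass_R(\Hom_R(R/I,L))$, and the right-hand side is finite since $\Hom_R(R/I,L)$ is finitely generated. The argument is short and essentially formal once Lemma~\ref{Lm3} is in hand; the only points that need a little care are that finite generation of $T$ is genuinely used (via $\Ext^1_R(R/I,T)$), since without it one cannot expect a quotient of $H^t_I(M,N)$ to have finitely many associated primes, and the elementary observation that for an $I_M$-torsion module the associated primes are already visible inside $(0:_{(-)}I)$.
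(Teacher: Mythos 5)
Your proof is correct and follows essentially the same route as the paper: verify the hypotheses of Lemma \ref{Lm3}, apply $\Hom_R(R/I,-)$ to the sequence $0\to T\to H^t_I(M,N)\to H^t_I(M,N)/T\to 0$, and conclude from finiteness of the outer terms. You merely make explicit two steps the paper leaves tacit, namely that finitely generated $I$-torsion modules are automatically $I$-cofinite minimax, and that $\Ass_R(L)\subseteq\Ass_R(\Hom_R(R/I,L))$ for an $I$-torsion module $L$.
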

\begin{proof} By the short exact sequence $0\sr T\sr H^t_I(M,N)\sr H^t_I(M,N)/T\sr 0,$ we get the following exact sequence
$$\Hom(R/I,H^t_I(M,N))\sr \Hom(R/I,H^t_I(M,N)/T)\sr\Ext^1_R(R/I,T).$$
Thus, since $\Hom(R/I,H^t_I(M,N))$ and $\Ext^1_R(R/I,T)$ are finitely generated by Lemma \ref{Lm3} and by the assumption of $T$, we obtain that $\Hom(R/I,H^t_I(M,N)/T)$ is also finitely generated. Thus the result follows.
\end{proof}

\begin{lemma}\label{Min}  Let $t$ be a non-negative
integer such that $H^j_I(M,N)$ is minimax for all $j<t$. Then $\Hom_R(R/I,H^t_I(M,N))$ is finitely generated
and $H^j_I(M,N)$ are $I$-cofinite for all $j<t$.
\end{lemma}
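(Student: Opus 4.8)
My plan is to show first that, under the hypothesis, each of $H^0_I(M,N),\dots,H^{t-1}_I(M,N)$ is not merely minimax but actually $I$-cofinite; once that is in hand, the finiteness of $\Hom_R(R/I,H^t_I(M,N))$ is precisely the conclusion of Lemma \ref{Lm3}. So the real task is to upgrade ``minimax'' to ``$I$-cofinite'' in the degrees $<t$.

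The tool I would use is the following standard criterion (essentially due to Melkersson, \cite{Me1}): if $K$ is a minimax $R$-module with $\Supp_R(K)\subseteq V(I)$ and $\Hom_R(R/I,K)$ is finitely generated, then $K$ is $I$-cofinite. One writes $0\to T\to K\to A\to 0$ with $T$ finitely generated — hence killed by a power of $I$, so $I$-cofinite — and $A$ Artinian; by the long exact $\Ext_R(R/I,-)$-sequence this reduces the problem to the Artinian module $A$, and for an Artinian module with support in $V(I)$ one uses Matlis duality over the completions of the finitely many local rings $R_{\m}$, $\m\in\Supp A$, to see that finite generation of $\Hom_R(R/I,A)$ forces finite generation of every $\Ext^i_R(R/I,A)$. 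The key point is that for all $j$ one has $\Supp_R\!\big(H^j_I(M,N)\big)\subseteq V(I_M)\subseteq V(I)$, since $H^j_I(M,N)$ is $I_M$-torsion by Lemma \ref{Lmbs}(ii) and $I\subseteq I_M$; hence the criterion always applies to $K=H^j_I(M,N)$, reducing its $I$-cofiniteness to the finite generation of $\Hom_R(R/I,H^j_I(M,N))$.

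With this, I would induct on $j$, $0\le j\le t-1$, to prove that $H^j_I(M,N)$ is $I$-cofinite. For $j=0$, $H^0_I(M,N)$ is a submodule of the finitely generated module $\Hom_R(M,N)$, hence finitely generated, with support in $V(I)$, hence $I$-cofinite. For $0<j\le t-1$, assume $H^i_I(M,N)$ is $I$-cofinite for all $i<j$; each such module is also minimax (as $i<j\le t-1<t$), so Lemma \ref{Lm3}, applied with $j$ in the role of $t$, shows that $\Hom_R(R/I,H^j_I(M,N))$ is finitely generated, and then the criterion above — $H^j_I(M,N)$ being minimax with support in $V(I)$ — gives that it is $I$-cofinite. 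This completes the induction, so $H^0_I(M,N),\dots,H^{t-1}_I(M,N)$ are $I$-cofinite minimax modules, and one final application of Lemma \ref{Lm3} yields that $\Hom_R(R/I,H^t_I(M,N))$ is finitely generated, as wanted. The delicate point is the criterion of the second paragraph, namely that the bare support condition $\Supp_R(K)\subseteq V(I)$ (available here through Lemma \ref{Lmbs}(ii)) is enough, with no completeness or dimension hypothesis; granting that, the rest is just bookkeeping with Lemma \ref{Lm3}.
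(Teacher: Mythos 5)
Your argument is correct and is essentially the paper's own proof: induct on $j$, use Lemma \ref{Lm3} at each stage to get finite generation of $\Hom_R(R/I,H^j_I(M,N))$, and then invoke Melkersson's criterion (\cite[Proposition 4.3]{Me1}) that a minimax module $K$ with $\Supp K\subseteq V(I)$ and $\Hom_R(R/I,K)$ finitely generated is $I$-cofinite. The only cosmetic difference is that you sketch a proof of that criterion while the paper simply cites it.
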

\begin{proof} By Lemma \ref{Lm3}, we need only to prove that $H^j_I(M,N)$ is $I$-cofinite for all $j<t$. We proceed by induction on $j$. It is clear that $H^0_I(M,N)$ is $I$-cofinite. Assume that $j>0$ and the result holds true for smaller values than $j$. Thus we obtain that $H^0_I(M,N), \ldots, H^{j-1}_I(M,N)$ are $I$-cofinite minimax by the inductive hypothesis and by the hypothesis. It follows by Lemma \ref{Lm3} that $\Hom(R/I,H^j_I(M,N))$ is finitely generated. So that $H^j_I(M,N)$ is $I$-cofinite by \cite[Proposition 4.3]{Me1} as required.
\end{proof}

\begin{lemma}\label{ATn} 
Let $t$ be a non-negative integer such that $\Supp(H^j_I(M,N))\subseteq\Max(R)$ for all $j<t$. Then $H^j_I(M,N)$ is Artinian for all $j<t$.
\end{lemma}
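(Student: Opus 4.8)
The plan is to argue by induction on $t$, using the inductive hypothesis to supply the minimaxness required by Lemma \ref{Min}, and then to convert the resulting finiteness of a $\Hom$-module into Artinianness by Melkersson's criterion. The case $t=0$ is vacuous, so suppose $t\ge 1$ and that the assertion is already known for $t-1$; then $H^j_I(M,N)$ is Artinian, hence minimax (take the zero finitely generated submodule), for all $j<t-1$, and it remains only to prove that $H^{t-1}_I(M,N)$ is Artinian. Applying Lemma \ref{Min} with $t-1$ in place of $t$ — whose hypothesis is exactly that $H^j_I(M,N)$ is minimax for all $j<t-1$ — yields that $\Hom_R(R/I,H^{t-1}_I(M,N))$ is finitely generated. (For $t=1$ this is Lemma \ref{Min} applied with the integer $0$, and gives the finite generation of $\Hom_R(R/I,H^0_I(M,N))$.)

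Next I would observe that $I\subseteq I_M$, since $I\cdot(M/IM)=0$ forces $I\subseteq\ann_R(M/IM)=I_M$; hence by Lemma \ref{Lmbs}(ii) the module $H^{t-1}_I(M,N)$ is $I_M$-torsion and a fortiori $I$-torsion. The submodule $(0:_{H^{t-1}_I(M,N)}I)\cong\Hom_R(R/I,H^{t-1}_I(M,N))$ is finitely generated by the previous paragraph, and its support is contained in $\Supp(H^{t-1}_I(M,N))\subseteq\Max(R)$; a finitely generated module supported only at maximal ideals has finite length, so $(0:_{H^{t-1}_I(M,N)}I)$ is Artinian. Invoking the criterion of Melkersson \cite{Me1} that an $I$-torsion $R$-module $K$ is Artinian if and only if $(0:_K I)$ is Artinian, we conclude that $H^{t-1}_I(M,N)$ is Artinian, completing the induction.

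The one point that needs care is the indexing of the induction: Lemma \ref{Min} must be invoked at level $t-1$ and not $t$, because at level $t$ one of the required hypotheses would be the minimaxness of $H^{t-1}_I(M,N)$, which is precisely what we are trying to establish. Apart from that, the argument is routine: the passage from $I_M$-torsion to $I$-torsion uses only the inclusion $I\subseteq I_M$, and the only reason "finitely generated" can be upgraded to "Artinian" is that $(0:_{H^{t-1}_I(M,N)}I)$ is supported at maximal ideals. I do not expect any genuinely hard obstacle beyond lining these ingredients up.
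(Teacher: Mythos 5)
Your proof is correct and follows essentially the same route as the paper: induction on $t$, using Lemma \ref{Min} at level $t-1$ (with the inductive hypothesis supplying the minimaxness since Artinian modules are minimax) to get that $\Hom_R(R/I,H^{t-1}_I(M,N))$ is finitely generated, then using $\Supp\subseteq\Max(R)$ to upgrade this to Artinian, and finally invoking Melkersson's criterion for $I$-torsion modules. The only slip is a citation: that criterion is \cite[Theorem 1.3]{Me2}, not \cite{Me1}.
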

\begin{proof}
We now prove the lemma by induction on $t$. If $t=1$ then it is clear that $H^0_I(M,N)$ is Artinian. Assume that $t\ge 2$ and the lemma holds true for $t-1$. By the inductive hypothesis, the $R$-modules $H^j_I(M,N)$ is Artinian for all $j<t-1$. Therefore, by Lemma \ref{Min},  $\Hom(R/I,H^{t-1}_I(M,N))$ is finitely generated. Thus, since $\Supp(\Hom(R/I,H^{t-1}_I(M,N)))\subseteq\Max(R)$, we obtain that $\Hom(R/I,H^{t-1}_I(M,N))$ is Artinian. On the other hand, as $H^{t-1}_I(M,N)$ is $I$-torsion, it follows by \cite[Theorem 1.3]{Me2} that $H^{t-1}_I(M,N)$ is Artinian.
\end{proof}

\section{Proof of Theorem \ref{dlc}}

We first need the following lemma which has been proved in \cite[Corollary 3.4]{Me1} by L. Melkersson. We  give here an another proof  for this result with elementary arguments.

\begin{lemma}\label{tccofi}  Let $K$ be an $R$-module. Suppose $x\in I$ and $\Supp(K)\subset V(I)$. If $(0:x)_K$ and $K/xK$ are both $I$-cofinite, then $K$ must be $I$-cofinite.
\end{lemma}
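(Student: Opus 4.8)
The plan is to reduce the statement to the standard criterion for $I$-cofiniteness via $\Ext^j_R(R/I,-)$ by exploiting the long exact sequences attached to the short exact sequences $0\to(0:x)_K\to K\xrightarrow{x}xK\to0$ and $0\to xK\to K\to K/xK\to0$. Set $L=K/xK$ and $L'=(0:x)_K$, both assumed $I$-cofinite. Since $x\in I$, we have $x\cdot\Ext^j_R(R/I,K)=0$ for all $j$, so each $\Ext^j_R(R/I,K)$ is annihilated by $x$; this is what lets us convert finite generation of the $\Ext$'s of the cohomology of $K/xK$ and $(0:x)_K$ back into finite generation for $K$ itself.

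First I would note that $\Supp(K)\subseteq V(I)$ is inherited by $xK$ and by all the modules in sight, so $I$-cofiniteness for any of them amounts only to finite generation of the modules $\Ext^j_R(R/I,-)$. Next, from $0\to xK\to K\to K/xK\to0$ and the hypothesis that $K/xK$ is $I$-cofinite, the long exact sequence in $\Ext^\bullet_R(R/I,-)$ shows that finite generation of $\Ext^j_R(R/I,K)$ for all $j$ is equivalent to finite generation of $\Ext^j_R(R/I,xK)$ for all $j$ (the outer terms coming from $K/xK$ being finitely generated). So it suffices to prove $xK$ is $I$-cofinite. Now apply $\Hom_R(R/I,-)$ (i.e.\ the long exact $\Ext$ sequence) to $0\to(0:x)_K\to K\xrightarrow{x}xK\to0$: since $(0:x)_K$ is $I$-cofinite, one gets that $\Ext^j_R(R/I,xK)$ is finitely generated provided $\Ext^j_R(R/I,K)$ is, for every $j$ — but that is the very thing we are trying to establish, so this alone is circular.

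To break the circularity I would argue by induction, peeling off one cohomological degree at a time. Concretely, I would show by induction on $j$ that $\Ext^j_R(R/I,K)$ is finitely generated. For $j=0$: $\Hom_R(R/I,K)\hookrightarrow\Hom_R(R/I,K/xK)$? — more carefully, the map $K\xrightarrow{x}K$ is zero on $\Hom_R(R/I,-)$ since $x\in I$ kills $R/I$, hence $\Hom_R(R/I,K)\cong\Hom_R(R/I,(0:x)_K)$, which is finitely generated because $(0:x)_K$ is $I$-cofinite. For the inductive step, multiplication by $x$ induces the zero map on $\Ext^j_R(R/I,K)$, so the long exact sequence of $0\to(0:x)_K\to K\xrightarrow{x}K\to K/xK\to0$ (split into the two short exact sequences above) yields exact pieces of the shape
$$\Ext^{j-1}_R(R/I,K/xK)\to\Ext^j_R(R/I,(0:x)_K)\to\Ext^j_R(R/I,K)\to\Ext^j_R(R/I,K/xK)$$
— actually, more usefully, the zero-multiplication fact gives short exact sequences
$$0\to \operatorname{coker}\big(\Ext^{j-1}_R(R/I,K)\xrightarrow{0}\Ext^{j-1}_R(R/I,K/xK)\big)\to \Ext^j_R(R/I,(0:x)_K)\to \Ext^j_R(R/I,K)\to 0$$
so that $\Ext^j_R(R/I,K)$ is a quotient of the finitely generated module $\Ext^j_R(R/I,(0:x)_K)$ (using that $R$ is Noetherian), hence finitely generated. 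I would double-check the exact bookkeeping of the connecting maps, but the engine is: $x$ acts as $0$ on $\Ext^\bullet_R(R/I,K)$, so the Bockstein-type long exact sequence forces each $\Ext^j_R(R/I,K)$ to be squeezed between finitely generated modules built from $(0:x)_K$ and $K/xK$.

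The main obstacle is precisely the apparent circularity noted above: naively chasing either short exact sequence expresses finiteness of the $\Ext$'s of $K$ in terms of themselves. The resolution — and the step requiring the most care — is to use the hypothesis $x\in I$ to make multiplication by $x$ act as zero on all $\Ext^j_R(R/I,K)$, which decouples the long exact sequence into short exact sequences no longer referring back to $\Ext^j_R(R/I,K)$ on both sides, letting a clean induction on $j$ go through. I expect verifying that this collapse of the long exact sequence into the displayed short exact sequences is valid (tracking that the relevant connecting/multiplication maps really are zero) to be the only genuinely delicate point; everything else is formal homological algebra plus Noetherianity.
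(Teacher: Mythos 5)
Your key observation --- that $x\in I$ forces multiplication by $x$ to act as zero on every $\Ext^j_R(R/I,K)$ --- is the same engine the paper uses, and the base case $j=0$ is correct. But the collapse of the long exact sequence that you rely on does not happen, and the short exact sequence you display is false.

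The long exact sequence obtained from $0\to(0:x)_K\to K\xrightarrow{\pi}xK\to 0$ has connecting maps $\pi_j\colon\Ext^j_R(R/I,K)\to\Ext^j_R(R/I,xK)$ induced by $\pi(k)=xk$, and the one from $0\to xK\xrightarrow{\iota}K\to K/xK\to 0$ has maps $\iota_j\colon\Ext^j_R(R/I,xK)\to\Ext^j_R(R/I,K)$. What the hypothesis $x\in I$ gives you is $\iota_j\circ\pi_j=0$ (this composition is multiplication by $x$ on $\Ext^j_R(R/I,K)$). It does \emph{not} give $\pi_j=0$ or $\iota_j=0$ individually. In particular, your claimed short exact sequence
\[
0\to \operatorname{coker}\bigl(\Ext^{j-1}_R(R/I,K)\xrightarrow{0}\Ext^{j-1}_R(R/I,K/xK)\bigr)\to\Ext^j_R(R/I,(0:x)_K)\to\Ext^j_R(R/I,K)\to 0
\]
would require $\pi_j=0$ so that $\Ext^j_R(R/I,K)$ is a quotient of $\Ext^j_R(R/I,(0:x)_K)$; this fails already when $x$ is a non-zero-divisor on $K$: then $(0:x)_K=0$, your sequence would force $\Ext^j_R(R/I,K)=0$ for all $j$, which is absurd (take $R=\Z$, $I=(2)$, $K=\Z$, $x=2$, so $\Ext^1_R(R/I,K)=\Z/2\ne 0$ while $\pi_1$ is an isomorphism).

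The correct squeeze is: from the first sequence, $\Ker(\pi_j)$ is a homomorphic image of $\Ext^j_R(R/I,(0:x)_K)$, hence finitely generated; from $\iota_j\circ\pi_j=0$ and the second sequence, $\Im(\pi_j)\subseteq\Ker(\iota_j)=\Im(\delta)$ for a connecting map $\delta$ with source $\Ext^{j-1}_R(R/I,K/xK)$, hence $\Im(\pi_j)$ is finitely generated too. Then $0\to\Ker(\pi_j)\to\Ext^j_R(R/I,K)\to\Im(\pi_j)\to 0$ gives finite generation of the middle term. This is exactly what the paper's commutative diagram of long exact sequences is tracking; note also that no induction on $j$ is needed, since the argument controls degree $j$ directly from the hypotheses at degrees $j$ and $j-1$.
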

\begin{proof} Let $t$ be a non-negative integer. We need only to claim that $\Ext^t_R(R/I,K)$ is finitely generated. By the commutative diagram
\[
\begin{aligned}
0\sr (0:_Kx)\sr&K\xrightarrow{\text{ }\ x} xK\mt 0\\
x&\downarrow\quad\searrow x\\
0\mt&xK\mt K\sr K/xK\sr 0 
\end{aligned}
\]
we obtain the following commutative diagram of long exact sequences
\[
\begin{aligned}
...\sr\Ext^t_R(R/I,(0:_Kx))\sr&\Ext^t_R(R/I,K)\xrightarrow{x^{(t)}}\Ext^t_R(R/I,xK)\sr...\\
&x^{(t)}\downarrow \qquad\qquad\quad\searrow x \\
...\sr\Ext^{t-1}_R(R/I,K/xK)\sr&\Ext^t_R(R/I,xK)\xrightarrow{f_t}\Ext^t_R(R/I,K)\sr...
\end{aligned}
\]
where $x^{(t)}=\Ext^t_R(R/I,x)$. Note that $K/xK$ is $I$-cofinite by the hypothesis, it implies that $\Ext^{t-1}_R(R/I,K/xK)$ is finitely generated. Thus $\Ker(f_t)$ is finitely generated. Moreover, since the triangle is commutative, so that $x^{(t)}\big((0:x)_{\Ext^t_R(R/I,K)}\big)\subseteq\Ker(f_t).$ 
It follows that $x^{(t)}\big((0:x)_{\Ext^t_R(R/I,K)}\big)$ is finitely generated.
On the other hand, $(0:x)_K$ is $I$-cofinite by the hypothesis, so we obtain that $\Ext^t_R(R/I,(0:x)_K)$ is finitely generated. It implies that $\Ker(x^{(t)})$ is finitely generated.
Therefore, by the following exact sequence
\[
\begin{aligned}
0\sr\Ker(x^{(t)})\cap (0:x)_{\Ext^t_R(R/I,K)}\sr (0:x)&_{\Ext^t_R(R/I,K)}\\
&\mt x^{(t)}\big((0:x)_{\Ext^t_R(R/I,K)}\big)\sr 0,
\end{aligned}
\]
we obtain that $(0:x)_{\Ext^t_R(R/I,K)}$ is finitely generated. Finally, note that $x\in I$, it yields that 
$\Ext^t_R(R/I,K)=(0:x)_{\Ext^t_R(R/I,K)}$ 
is finitely generated as required.
\end{proof}

We now are ready to prove Theorem \ref{dlc}.

\begin{proof}[\bf Proof of Theorem \ref{dlc}]
Assume that $I=Rx$ is a principal ideal. From the short exact sequence 
$$0\sr\Gamma_I(M)\sr M\sr \overline M\sr 0,$$ 
where $\overline M=M/\Gamma_I(M)$, we get by \cite{HZ} the following exact sequence 
$$H^{i-1}_I(\Gamma_I(M),N)\sr H^i_I(\overline M,N)\sr H^i_I(M,N)\sr H^i_I(\Gamma_I(M),N)$$
for all $i$. Since $\Gamma_I(M)=(0:I^k)_M$ for some positive integer $k$, we get by Lemma \ref{Lmbs} that 
$$H^i_I(\Gamma_I(M),N)=H^i_{I^k}((0:I^k)_M,N)\cong\Ext^i_R(\Gamma_I(M),N)$$
for all $i$. Hence $H^i_I(\Gamma_I(M),N)$ is finitely generated for all $i$, it follows by the above exact sequence that $H^i_I(M,N)$ is $I$-cofinite if and only if so is $H^i_I(\overline M,N)$. Hence we may assume that $\Gamma_I(M)=0$. So that $I\nsubseteq\p$ for all $\p\in\Ass(M)$. It implies that $x\notin\p$ for all $\p\in\Ass(M)$. Thus we obtain an exact sequence 
$$0\sr M\xrightarrow{x} M\sr M/xM\sr 0.$$ 
From this we have the following exact sequence
$$0\sr H^{i-1}_I(M,N)/xH^{i-1}_I(M,N)\sr H^i_I(M/xM,N)\sr (0:x)_{H^i_I(M,N)}\sr 0$$
for all $i$. Note that, as $I=Rx$, so we obtain by Lemma \ref{Lmbs} that
$$H^i_I(M/xM,N)\cong\Ext^i_R(M/xM,N)$$ 
for all $i$. Hence $H^i_I(M/xM,N)$ is finitely generated for all $i$. Thus by the above exact sequence we obtain that 
$$(0:x)_{H^i_I(M,N)}\text{ and }H^i_I(M,N)/xH^i_I(M,N)$$ 
are finitely generated for all $i$. Therefore we get by Lemma \ref{tccofi} that $H^i_I(M,N)$ is $I$-cofinite for all $i$.
\end{proof}

By replacing $M$ by $R$ in Theorem \ref{dlc} we obtain a theorem of K. I. Kawasaki on the cofiniteness of local cohomology modules as follows.

\begin{corollary}\label{mrkaw1} {\rm(\cite[Theorem 1]{Kaw1})} If $I$ is a principal ideal, then $H^j_I(N)$ is $I$-cofinite for all finitely generated $R-$module $N$ and all $j$.
\end{corollary}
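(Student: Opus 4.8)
The plan is to derive the corollary immediately from Theorem \ref{dlc} by specializing to $M = R$. First I would recall the identification, already pointed out in the introduction, that $H^j_I(R,N) \cong H^j_I(N)$ for every finitely generated $R$-module $N$ and every $j \ge 0$: when $M = R$ one has $M/I^nM = R/I^n$, so $H^j_I(R,N) = \dlim \Ext^j_R(R/I^n,N)$ is exactly the $j$-th ordinary local cohomology module of $N$ with respect to $I$.

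Since $I$ is principal by hypothesis, Theorem \ref{dlc} applies verbatim with this choice of $M$ and gives that $H^j_I(R,N)$ is $I$-cofinite for all finitely generated $N$ and all $j$. Combining this with the identification above, $H^j_I(N)$ is $I$-cofinite for all $j$, which is the claim; thus the corollary follows by a single substitution.

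I do not expect any genuine obstacle here, since the substantive arguments have already been carried out inside the proof of Theorem \ref{dlc}: the reduction to the case $\Gamma_I(M) = 0$ through the exact sequence of \cite{HZ}, the multiplication-by-$x$ short exact sequence linking $H^i_I(M/xM,N)$, $H^{i-1}_I(M,N)/xH^{i-1}_I(M,N)$ and $(0:x)_{H^i_I(M,N)}$, the finiteness input $H^i_I(M/xM,N) \cong \Ext^i_R(M/xM,N)$, and the cofiniteness criterion of Lemma \ref{tccofi}. For $M = R$ each of these specializes transparently --- $\Gamma_I(R)$ is just the $I$-torsion submodule of $R$ and $R/xR$ is finitely generated --- so the only thing to check is the routine bookkeeping that $H^{\bullet}_I(R,-)$ recovers classical local cohomology. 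One might also observe that, conversely, any attempt at an independent proof would essentially reproduce the $M = R$ instance of the argument for Theorem \ref{dlc}, so there is nothing to be gained by arguing separately.
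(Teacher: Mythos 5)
Your proposal is correct and matches the paper exactly: both simply set $M = R$ in Theorem \ref{dlc} and use the identification $H^j_I(R,N) \cong H^j_I(N)$ noted in the introduction. Nothing further is needed.
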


\section{Proof of Theorem \ref{Thm1}}
Before proving Theorem \ref{Thm1}, we need to recall some known facts on the theory of secondary representation. 

In \cite{Mac}, I. G. Macdonald has developed the theory of attached prime ideals and secondary representation of a module, which is (in a certain sense) a dual to the theory of associated prime ideals and primary decompositions. A non-zero $R$-module $K$ is called secondary if for each $a\in R$ multiplication by $a$ on $K$ is either surjective or nilpotent. Then $\p = \sqrt{\ann(K)}$ is a prime ideal and $K$ is called $\p$-secondary. We say that $K$ has a secondary representation if there is a finite number of secondary submodules $K_1, K_2,\ldots, K_n$ such that $K = K_1 +K_2 +\ldots+K_n.$ 
One may assume that the prime ideals $\p_i = \sqrt{\ann(K_i)}, i = 1, 2,\ldots, n$ are all distinct, and by omitting redundant summands, that the representation is minimal. Then the set of prime ideals $\{\p_1,\ldots,\p_n\}$ does not depend on the representation, and it is called the set of attached prime ideals of $K$ and denoted by $\Att(K).$ Note that if $A$ is an Artinian $R$-module then $A$ has a secondary representation. The basic properties on the set $\Att(A)$ of attached primes of $A$ are referred in a paper by I. G. Macdonal \cite{Mac}. If $0 \sr A_1\sr A_2\sr A_3\sr 0$ is an exact sequence of Artinian $R$-modules then $$\Att(A_3)\subseteq \Att(A_2)\subseteq\Att(A_1)\cup\Att(A_3).$$

\begin{lemma}\label{TCA} Let $x$ be an element of $R$,  $I$ an ideal of $R$ and $A$ an Artinian $R$-module. Then the following statements are true.
\begin{itemize}
\item[(i)] If $x\notin\p$ for all $\p\in\Att(A)\setminus\Max(R)$, then $\ell(A/xA)<\infty$.
\item[(ii)] If $(0:I)_A$ is finitely generated, then $I\nsubseteq\p$ for all $\p\in\Att(A)\setminus\Max(R)$. 
\end{itemize}
\end{lemma}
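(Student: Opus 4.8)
The plan is to treat the two parts independently, using only the basic formalism of secondary representations of Artinian modules together with a few standard facts: a module over $R$ that is both finitely generated and Artinian has finite length; an Artinian module over an Artinian ring has finite length; and a module of finite length has only maximal attached primes.

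For (i), I would start from the observation that $x$ annihilates $A/xA$, so every prime in $\Att(A/xA)$ contains $x$. Since $A/xA$ is a quotient of the Artinian module $A$, the inclusion $\Att(A/xA)\sb\Att(A)$ recalled above gives $\Att(A/xA)\sb\{\p\in\Att(A):x\in\p\}$, and the hypothesis on $x$ forces this set into $\Max(R)$. It then suffices to show that an Artinian module $B$ with $\Att(B)\sb\Max(R)$ has finite length: here $\sqrt{\ann(B)}=\bigcap_{\p\in\Att(B)}\p$ is a finite intersection of maximal ideals, so (as $R$ is Noetherian) some power of it lies in $\ann(B)$, whence $R/\ann(B)$ is an Artinian ring over which $B$ is Artinian, so $\ell(B)<\infty$. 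Applying this to $B=A/xA$ proves (i).

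For (ii) I plan to argue by contradiction: assume $(0:I)_A$ is finitely generated while $I\sb\p$ for some $\p\in\Att(A)\setminus\Max(R)$. Since $\p\in\Att(A)$, I may pick a $\p$-secondary submodule $S\sb A$ (a term of a minimal secondary representation of $A$). Then $(0:I)_S=S\cap(0:I)_A$ is a submodule of a finitely generated module over the Noetherian ring $R$, hence finitely generated. As $\p=\sqrt{\ann(S)}$ and $I$ is finitely generated, $I^k\sb\ann(S)$ for some $k\ge 1$, i.e. $(0:I^k)_S=S$. The key point is to propagate finite generation up the chain $(0:I)_S\sb(0:I^2)_S\sb\dots\sb(0:I^k)_S=S$: for each $n\ge 0$ the rule $s\mapsto(a\mapsto as)$ defines an $R$-linear map $(0:I^{n+1})_S\sr\Hom_R(I^n,(0:I)_S)$, which is well defined because $I(as)\sb I^{n+1}s=0$ for $a\in I^n$ and $s\in(0:I^{n+1})_S$, and whose kernel is exactly $(0:I^n)_S$; since $I^n$ and $(0:I)_S$ are finitely generated, so is the target, hence $(0:I^{n+1})_S/(0:I^n)_S$ is finitely generated. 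By induction on $n$, every $(0:I^n)_S$ is finitely generated, and so $S=(0:I^k)_S$ is finitely generated. But a finitely generated $\p$-secondary module necessarily has $\p$ maximal: if $\p\subsetneq\m$ with $\m$ maximal, choose $a\in\m\setminus\p$; secondariness makes multiplication by $a$ surjective on $S$, hence an isomorphism since $S$ is finitely generated, yet this multiplication acts as $0$ on $S/\m S\ne 0$, a contradiction. Thus $\p\in\Max(R)$, contradicting the choice of $\p$, and (ii) follows.

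The manipulations with secondary representations, the quoted finiteness facts, and the closing Nakayama-type argument are all routine. The one step I expect to need genuine care is the inductive climb in (ii): one must verify that the displayed multiplication map really does land in $(0:I)_S$ and has kernel precisely $(0:I^n)_S$, and that $\Hom_R(I^n,(0:I)_S)$ is finitely generated over $R$ --- once these are confirmed the conclusion is mechanical, and I do not foresee any deeper obstacle.
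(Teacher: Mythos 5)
Both parts of your proof are correct, and both take routes noticeably different from the paper's. For (i), the paper explicitly manipulates a minimal secondary representation $A=A_1+\cdots+A_n+B_1+\cdots+B_t$, separating the non-maximal and maximal attached primes, and exhibits $A/xA$ as a quotient of the ``maximal part'' $B$, whence $\Att(A/xA)\subseteq\Att(B)\subseteq\Max(R)$. Your observation that $x$ annihilates $A/xA$, so that every attached prime of $A/xA$ contains $x$, combined with $\Att(A/xA)\subseteq\Att(A)$, reaches the same conclusion more directly, without having to split up the representation; both then finish by noting that an Artinian module all of whose attached primes are maximal has finite length. For (ii), the paper works at the level of $A$: it proves by induction the radical-stability claim $\sqrt{\ann(0:_AI)}=\sqrt{\ann(0:_AI^n)}$ for all $n$, deduces that $(0:_AI^n)$ has finite length whenever $(0:_AI)$ does, and then observes that a $\p$-secondary submodule $U$ is killed by $I^n$ for some $n$, so $U\subseteq(0:_AI^n)$ has finite length, forcing $\p$ maximal. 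You instead restrict to a $\p$-secondary component $S$, note $(0:I)_S\subseteq(0:I)_A$ is finitely generated, and climb the chain $(0:I^n)_S$ via the well-defined embeddings $(0:I^{n+1})_S/(0:I^n)_S\hookrightarrow\Hom_R(I^n,(0:I)_S)$ to show $S$ itself is finitely generated, then close with the standard observation (surjective endomorphism of a finitely generated module is bijective, plus Nakayama) that a finitely generated $\p$-secondary module forces $\p$ maximal. Both arguments are sound; yours replaces the auxiliary radical-stability lemma with the $\Hom$-bootstrap and a Nakayama step, which is arguably a bit cleaner, while the paper's has the advantage of working directly with $(0:_AI^n)$ at the level of $A$ rather than inside a chosen secondary component.
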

\begin{proof} $(i)$ Assume that $\Att(A)\setminus\Max(R)=\{\p_1,\ldots,\p_n\}$. Let 
$$A=A_1+\ldots+A_n+B_1+\ldots+B_t$$ 
be a minimal secondary representation of $A$, where $A_i$ is $\p_i$-secondary and $B_j$ is $\m_i$-secondary for all $i=1,\ldots,n$ and all $j=1,\ldots,t$ (with $\m_j\in\Max(R)$ for all $j=1,\ldots,t$). Set $B=B_1+\ldots+B_t$, then $\Att(B)\subseteq\Max(R)$. Since $x\notin\p_i$ for all $i=1,\ldots,n$, so that $xA_i=A_i$ for all $i=1,\ldots,n$. It follows that $xA=A_1+\ldots+A_n+xB$. Note that 
\begin{alignat}{2}
A/xA&=\big((A_1+\ldots+A_n+xB)+B\big)/\big(A_1+\ldots+A_n+xB\big)\notag\\
&\cong B/\big(B\cap (A_1+\ldots+A_n+xB)\big).\notag
\end{alignat}
Therefore  
$\Att(A/xA)\subseteq\Att\big(B/(B\cap (A_1+\ldots+A_n+xB))\big)\subseteq\Att(B)\subseteq\Max(R).$
From this, since $A/xA$ is Artinian, so that $\ell(A/xA)<\infty$.

\noindent (ii) We first claim that $\sqrt{\ann(0:_AI)}=\sqrt{\ann(0:_AI^n)}$ for all $n\ge 2$. Consider $n=2$, it is clear that $\sqrt{\ann(0:_AI)}\supseteq\sqrt{\ann(0:_AI^2)}$. Conversely, for any $a\in\sqrt{\ann(0:_AI)}$ then there is an integer $t>0$ such that $a^t(0:_AI)=0$. We now prove that $a^{2t}(0:_AI^2)=0$ (and therefore $a\in\sqrt{\ann(0:_AI^2)}$\  ). Indeed, for any $y\in (0:_AI^2)$, then $I^2y=0$. So that $Iy\subseteq(0:_AI)$, thus $a^t(Iy)=0$. Hence $a^ty\in (0:_AI)$, and thus $a^t(a^ty)=0$. Therefore $a^{2t}y=0$. We now assume that $n>2$ and the claim is true for $n-1$. Let $a\in\sqrt{\ann(0:_AI)}$ then by induction assumption $a\in\sqrt{(0:_AI^{n-1})}$. Thus $a^t(0:_AI^{n-1})=0$ for some $t>0$. For any $y\in(0:_AI^n)$, then $I^{n-1}Iy=I^ny=0$. Hence $Iy\subseteq(0:_AI^{n-1})$, so that $I(a^ty)=a^t(Iy)=0$. It implies that $a^ty\in(0:_AI)$. On the other hand, since $a\in\sqrt{\ann(0:_AI)}$, so $a^l(0:_AI)=0$ for some $l>0$. Therefore $a^{t+l}y=0$, it yields that $a\in\sqrt{(0:_AI^n)}$. So we get the claim. Finally for any $\p\in\Att(A)\setminus\Max(R)$ we obtain that $I\nsubseteq\p$. Indeed, assume that $I\subseteq\p$ for some $\p\in\Att(A)\setminus\Max(R)$. Then there exists a submodule $U$ of $A$ such that $U$ is $\p$-secondary. Thus there is an integer $n$ such that $\p^nU=0$. Hence, as $I\subseteq\p$, so that $I^nU=0$. Therefore $U=(0:_UI^n)\subseteq (0:_AI^n)$. Hence since $\ell(0:_AI)<\infty$ then we get by the claim that $(0:_AI^n)$ is of finite length. It implies that $\ell(U)<\infty$, so $\p\in\Max(R)$, this is a contradiction. 
\end{proof}

\begin{lemma}\label{BdMJ} Let $t$ be a non-negative integer. Then 
\begin{itemize}
\item[(i)] $H^t_I(M,N)$ is $I-$cofinite if and only if $H^t_I(M,N)$ is $I_M-$cofinite, where $I_M=\ann_R(M/IM)$.
\item[(ii)] $\Hom(R/I,H^t_I(M,N))$ is finitely generated if and only if so is $\Hom(R/I_M,H^t_I(M,N))$.
\end{itemize}
\end{lemma}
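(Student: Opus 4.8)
Write $K=H^t_I(M,N)$. Everything rests on three elementary observations. (a) $I\sb I_M$, since $I\cdot(M/IM)=0$. (b) $\ann(M)$ annihilates $K$: each $\Ext^t_R(M/I^nM,N)$ is killed by $\ann_R(M/I^nM)\sp\ann(M)$, hence so is the direct limit $K$, and thus $K$ is canonically a module over $\bar R:=R/\ann(M)$. (c) By Lemma \ref{Lmbs}(ii), $\Supp K\sb V(I_M)\sb V(I)$ (so $K$ is $I$-torsion), and $\sqrt{I_M}=\sqrt{I+\ann(M)}$, both ideals cutting out $\Supp(M/IM)=\Supp(M)\cap V(I)$. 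Combining (b) and (c), the ideals $I\bar R$ and $I_M\bar R$ of $\bar R$ have the same radical and $\Supp_{\bar R}K\sb V(I\bar R)$.

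For part (i) the plan is to use two stability properties of cofiniteness, both of which can be extracted from the results of Melkersson in \cite{Me1}: for an $R$-module $K$ with $\Supp K\sb V(\a)$, whether or not $K$ is $\a$-cofinite is unchanged (1) if $\a$ is replaced by an ideal with the same radical, and (2) if one passes from $R$ to $R/\b$ for any ideal $\b\sb\ann(K)$ (replacing $\a$ by its image) --- the conceptual reason for both being that $\a$-cofiniteness of such a $K$ can be tested on the Koszul cohomology modules $H^j(\x;K)$, for $\x$ any finite system of elements generating an ideal with radical $\sqrt\a$, and this depends only on $\x$ and $K$, not on the ambient ring. Granting (1) and (2), I would chain: $K$ is $I$-cofinite over $R$ $\iff$ $K$ is $I\bar R$-cofinite over $\bar R$ (by (2), with $\b=\ann(M)$, thanks to (b)) $\iff$ $K$ is $I_M\bar R$-cofinite over $\bar R$ (by (1), thanks to (c)) $\iff$ $K$ is $I_M$-cofinite over $R$ (by (2) again). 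One can also avoid Koszul cohomology: ``$\Rr$'' is immediate from the fact, also in \cite{Me1}, that $\Ext^j_R(L,K)$ is finitely generated for all $j$ whenever $K$ is $I$-cofinite and $L$ is finitely generated with $\Supp L\sb V(I)$ (apply this with $L=R/I_M$); for ``$\Ll$'', first replace $I_M$ by $J:=I+\ann(M)$ (legitimate by (1) and (c)), then use $0\sr J/I\sr R/I\sr R/J\sr 0$ to reduce to the finite generation of $\Ext^j_R(J/I,K)$, which follows from the change-of-rings spectral sequence $\Ext^p_{\bar R}(\Tor^R_q(J/I,\bar R),K)\Rr\Ext^{p+q}_R(J/I,K)$ since each $\Tor^R_q(J/I,\bar R)$ is a finitely generated $\bar R$-module supported in $V(J\bar R)$ and $K$ is $J\bar R$-cofinite over $\bar R$.

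For part (ii) I would run the same idea for the functor $(0:_K\a)=\Hom_R(R/\a,K)$. One inclusion is trivial: $(0:_K I_M)\sb(0:_K I)$, and a submodule of a finitely generated module over a Noetherian ring is finitely generated, so finite generation of $\Hom_R(R/I,K)$ forces that of $\Hom_R(R/I_M,K)$. Conversely, (b) gives $(0:_K I)=(0:_K(I+\ann M))$, and choosing $k$ with $I_M^k\sb I+\ann(M)$ (possible by (c)) gives $(0:_K I)\sb(0:_K I_M^k)$; finally, if $(0:_K I_M)$ is finitely generated then so is $(0:_K I_M^k)$ for every $k$, by the same induction (using that $I_M$ is finitely generated) as in the proof of Lemma \ref{TCA}(ii). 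Hence $(0:_K I)=\Hom_R(R/I,K)$ is finitely generated.

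The main obstacle is the implication ``$I_M$-cofinite $\Rr$ $I$-cofinite'' in part (i): since $R/I$ is not supported in $V(I_M)$, one cannot directly invoke ``$\Ext$ against a finitely generated module supported in the torsion locus is finitely generated'', and the annihilation of $K$ by $\ann(M)$ must be brought in to descend to $\bar R=R/\ann(M)$, where $I$ and $I_M$ finally acquire a common radical. Making that descent precise --- through the Koszul-cohomology characterization of cofiniteness or through the change-of-rings spectral sequence above --- is the real content; everything else is formal manipulation with short exact sequences and the cited results of \cite{Me1}.
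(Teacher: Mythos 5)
Your main plan for part (i) is correct and is in essence the same as the paper's argument: both hinge on (a) $\ann(M)\subseteq\ann(K)$, (b) $\sqrt{I_M}=\sqrt{I+\ann(M)}$, and (c) Melkersson's characterization of cofiniteness via finite generation of Koszul cohomology \cite[Theorem 1.1]{Me0}. The paper stays inside $R$ and runs an explicit descending induction on Koszul cohomology, killing the generators $y_1,\dots,y_s$ of $\ann(M)$ one at a time using $y_iK=0$; you instead package the same content as a change-of-rings principle (pass to $\bar R=R/\ann(M)$, where $I\bar R$ and $I_M\bar R$ share a radical, and observe the Koszul criterion is ring-independent). These are the same idea in two presentations, and your framing is arguably cleaner conceptually, provided one records that Melkersson's criterion does not depend on the choice of generating set. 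Part (ii) is also fine: the paper simply notes $\Hom(R/(I+\ann M),K)\cong\Hom(R/I,K)$ and applies radical-insensitivity of the finite generation of $\Hom$; your route through $I_M^k\subseteq I+\ann(M)$ and the standard ``$(0:_KI)$ f.g.\ $\Rightarrow(0:_KI^k)$ f.g.'' argument reaches the same place with a bit more work.

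However, your proposed Koszul-free alternative for the implication ``$I_M$-cofinite $\Rightarrow$ $I$-cofinite'' has a genuine gap. After replacing $I_M$ by $J=I+\ann(M)$ and reducing via $0\to J/I\to R/I\to R/J\to 0$ to the finite generation of $\Ext^j_R(J/I,K)$, you feed the spectral sequence $\Ext^p_{\bar R}(\Tor^R_q(J/I,\bar R),K)\Rightarrow\Ext^{p+q}_R(J/I,K)$ with the input ``$K$ is $J\bar R$-cofinite over $\bar R$''. But your hypothesis at that point is only that $K$ is $J$-cofinite over $R$; converting this to $J\bar R$-cofiniteness over $\bar R$ is precisely the change-of-rings step whose justification you are trying to avoid. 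The edge map $\Ext^p_{\bar R}(L,K)\to\Ext^p_R(L,K)$ coming from the same spectral sequence is neither injective nor surjective in general, so knowing the $R$-Ext modules are finitely generated does not immediately bound the $\bar R$-Ext modules. As far as I can see, there is no way to supply this input without invoking the Koszul characterization (or some equivalent), so the ``alternative'' is not actually independent of the main route; I would drop that remark or replace it with a proof of the change-of-rings principle itself.
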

\begin{proof} Set $K=H^t_I(M,N)$. Note that $\Supp(K)\subseteq \Supp(R/I_M)\subseteq \Supp(R/I)$.
\item (i) If $K$ is $I-$cofinite then, since $I\subseteq I_M$, we get that $K$ is $I_M-$cofinite by \cite[Proposition 1]{DM}. Assume that $K$ is $I_M-$cofinite. Thus, as  $\sqrt{I_M}=\sqrt{I+\ann(M)}$, $K$ is $(I+\ann(M))-$cofinite by \cite[Proposition 1]{DM}. Let $x_1,\ldots,x_t, y_1,\ldots,y_s$ be generators of $I+\ann(M)$ such that $I=(x_1,\ldots,x_t)$ and $\ann(M)=(y_1,\ldots,y_s)$. Then Koszul cohomology modules $H^j(\underline x, y_1,\ldots,y_s; K)$ are finitely generated $R-$modules for all $j$ by \cite[Theorem 1.1]{Me0} (here we set $\underline x=x_1,\ldots,x_t$ for short). We now claim by descending induction on $l$ (with $0\le l\le s$) that 
$H^j(\underline x, y_1,\ldots,y_l; K)$ are finitely generated $R-$modules for all $j$, where we use the convention that $H^j(\underline x; K)=H^j(\underline x, y_1,\ldots,y_l; K)$ if $l=0$. If $l=s$ then the claim is clear. Suppose $l<s$ and $H^j(\underline x, y_1,\ldots,y_{l+1}; K)$ are finitely generated $R-$modules for all $j$. We first consider the case $j=0$. As $y_{l+1}\in\ann(K)$, so we get that
\begin{alignat}{2}
H^0(\underline x,y_1,\ldots,y_l;K)&\cong (0:_K(\underline x,y_1,\ldots,y_l)R)\notag\\
&\cong (0:_K(\underline x,y_1,\ldots,y_l,y_{l+1})R)\notag\\
&\cong H^0(\underline x,y_1,\ldots,y_l, y_{l+1};K).\notag
\end{alignat}
Thus $H^0(\underline x,y_1,\ldots,y_l;K)$ is a finitely generated $R-$module. Assume that $j\ge 1$. We consider the following exact sequences (cf. \cite[Section 5]{Me3})
$$H^{j-1}(\underline x, y_1,\ldots,y_l,y_{l+1};K)\rightarrow H^{j}(\underline x, y_1,\ldots,y_l;K)\xrightarrow{y_{l+1}} H^{j}(\underline x, y_1,\ldots,y_l;K)$$
for all $j\ge 1$. Since $y_{l+1}\in\ann(K)$, so that $y_{l+1}H^{j}(\underline x, y_1,\ldots,y_l;K)=0$. Hence, the above exact sequence implies that the following sequence
$$H^{j-1}(\underline x, y_1,\ldots,y_l,y_{l+1};K)\rightarrow H^{j}(\underline x, y_1,\ldots,y_l;K)\rightarrow0$$
is exact for all $j\ge 1$. From this we get by induction assumption that $H^j(\underline x, y_1,\ldots,y_l;K)$ are finitely generated $R-$modules for all $j\ge 1$. Thus the claim is proved. In particular,  $H^j(\underline x; K)$ are finitely generated $R-$modules for all $j$. Therefore, we get by \cite[Theorem 1.1]{Me0} again that $K$ is $I-$cofinite.

\item (ii) We note that $\Hom(R/I+\ann(M),K)\cong\Hom(R/I, K)$, as $\ann(M)\subseteq\ann(K)$. Hence, since $\sqrt{I+\ann(M)}=\sqrt{I_M}$, the result follows by \cite[Proposition 1]{DM}.
\end{proof}

We are now ready to prove Theorem \ref{Thm1}.

\begin{proof}[Proof of Theorem \ref{Thm1}]
By Lemma \ref{BdMJ}, we need only to claim that $H^j_I(M,N)$ is $I_M-$cofinite for all $j<t$ and $\Hom(R/I_M, H^t_I(M,N))$ is finitely generated, provided $\dim\Supp(H^j_I(M,N))\le 1$ for all $j<t$ (where t is a given integer).

We prove the claim by induction on $t\ge 0$. The case of $t=0$ is trivial. If $t=1$ then it is clear that $H^0_I(M,N)$ is $I_M$-cofinite; moreover we get by Lemma \ref{Min} that $\Hom(R/I_M, H^1_I(M,N))$ is finitely generated. Assume that $t>1$ and the result holds true for the case $t-1$. From the short exact sequence $0\sr\Gamma_{I_M}(N)\sr N\sr \overline N\sr 0,$ we get the long exact sequence 
$$\Ext^j_R(M,\Gamma_{I_M}(N))\xrightarrow{f_j} H^j_I(M,N)\xrightarrow{g_j} H^j_I(M,\overline N)\xrightarrow{h_j}\Ext^{j+1}_R(M,\Gamma_{I_M}(N)),$$
where $\overline N=N/\Gamma_{I_M}(N)$. For each $j\ge 0$ we split the above exact sequence into two the following exact sequences
$$0\sr\Im f_j\sr H^j_I(M,N)\sr\Im g_j\sr 0\text{ and}$$ 
$$0\sr\Im g_j\sr H^j_I(M,\overline N)\sr\Im h_j\sr 0.$$
Note that $\Im f_j$ and $\Im h_j$ is finitely generated for all $j\ge 0$. Then, for each $j<t$, we obtain that $H^j_I(M,N)$ is $I_M$-cofinite if and only if so is $H^j_I(M,\overline N)$. On the other hand, we get by Lemma \ref{kte} that $\dim\Supp(H^j_I(M,\overline N))\le 1$ for all $j<t$. Therefore, in order to prove the theorem for the case of $t>1$, we may assume that $\Gamma_{I_M}(N)=0$. Hence $I_M\nsubseteq\bigcup_{\p\in\Ass_R(N)}\p.$ 
Set $$X=\bigcup_{j=0}^{t-1}\Supp(H^j_I(M,N))\text{ and } S=\{\p\in X\mid\dim(R/\p)=1\}.$$ 
Thus $S\subseteq\bigcup_{j=0}^{t-1}\Ass(H^j_I(M,N))$. Note that $H^j_I(M,N)$ is $I_M$-cofinite for all $j<t-1$ and $\Hom(R/I_M,H^{t-1}_I(M,N))$ is finitely generated by the inductive hypothesis. It implies that $\bigcup_{j=0}^{t-1}\Ass(H^j_I(M,N))$ is a finite set, and so $S$ is a finite set. Assume that $S=\{\p_1, \p_2,\ldots,\p_n\}$. Then it is clear that
$$\Supp_{R_{\p_k}}(H^j_{IR_{\p_k}}(M_{\p_k},N_{\p_k}))\subseteq \Max(R_{\p_k})$$ 
for all $j<t$ and all $k=1,\ldots,n$. From this, we get by Lemma \ref{ATn} that $H^j_{IR_{\p_k}}(M_{\p_k},N_{\p_k})$ is Artinian for all $j<t$ and all $k=1,\ldots,n$. Note that $V(I_M)\subseteq V(I)$. Hence, it implies by Lemma \ref{Min} and \cite[Proposition 1]{DM} that  
$\Hom(R_{\p_k}/(I_M)R_{\p_k}, H^j_{IR_{\p_k}}(M_{\p_k},N_{\p_k}))$ is finitely generated for all $j<t$ and all $k=1,\ldots,n$. Therefore it yields by Lemma \ref{TCA}(ii) that $$V((I_M)R_{\p_k})\cap\Att_{R_{\p_k}}(H^j_{IR_{\p_k}}(M_{\p_k},N_{\p_k}))\subseteq\Max(R_{\p_k})$$ for all $j<t$ and all $k=1,\ldots,n$. Let 
$$ T= \bigcup_{j=0}^{t-1}\bigcup_{k=1}^n\{\q\in\Spec R\mid\q R_{\p_k}\in\Att_{R_{\p_k}}(H^j_{IR_{\p_k}}(M_{\p_k},N_{\p_k}))\}.$$ 
Then we have $T\cap V(I_M)\subseteq S$. We now choose an element $x\in I_M$ such that 
$$x\notin\Big(\bigcup_{\p\in T\setminus V(I_M)}\p\Big)\cup\Big(\bigcup_{\p\in\Ass_R(N)}\p\Big).$$
Thus, we have the short exact sequence $0\sr N\xrightarrow{x} N\sr N/xN\sr 0.$ It implies the following exact sequence
$$\begin{aligned}
H^j_I(M,N)\xrightarrow{x}H^j_I(M,N)\sr H^j_I(M,N/xN)\sr H^{j+1}_I(M,N)
\end{aligned}$$
for all $j\ge 0$. Thus, we have an exact sequence 
\begin{equation}0\sr H^j_I(M,N)/xH^j_I(M,N)\xrightarrow{\alpha_j} H^j_I(M,N/xN)\xrightarrow{\beta_j} (0:x)_{H^{j+1}_I(M,N)}\sr 0\tag{1}\end{equation} 
for all $j\ge 0$. Note that $\dim\Supp(H^j_I(M,N/xN))\le 1$ for all $j<t-1$ by the above exact sequence and by the hypothesis. So that, we get by the induction assumption that $H^0_I(M,N/xN), H^1_I(M,N/xN),\ldots, H^{t-2}_I(M,N/xN)$ are $I_M$-cofinite and $\Hom(R/I_M, H^{t-1}_I(M,N/xN))$ is finitely generated. Moreover, also by the induction assumption, we have $H^0_I(M,N), H^1_I(M,N),\ldots, H^{t-2}_I(M,N)$ are $I_M$-cofinite and $\Hom(R/I_M, H^{t-1}_I(M,N))$ is finitely generated. For each $j<t$, we set $L_j=H^j_I(M,N)/xH^j_I(M,N)$. By the choice of $x$ and by Lemma \ref{TCA},  we obtain  that $(L_j)_{\p_k}$ has finite length for all $j<t$ and all $k=1,\ldots, n$. From this by the Noetherianness of  $(L_j)_{\p_k}$, there exists a finitely generated submodule $L_{jk}$ of $L_j$ such that $(L_j)_{\p_k}=(L_{jk})_{\p_k}$ for any $j<t$ and any $k=1,\ldots, n$. Let $L'_j=L_{j1}+L_{j2}+\ldots + L_{j_n}$. Then $L'_j$ is a finitely generated submodule of $L_j$ satisfying the following inclusion
$$\Supp(L_j/L'_j)\subseteq X\setminus\{\p_1,\p_2,\ldots,\p_n\}\subseteq\Max(R)$$
for all $j<t$. For each $j<t$, we set $N_j=H^j_I(M,N/xN)$ and $N'_j=\alpha_j(L'_j)$. Then $N'_j$ is a finitely generated submodule of $N_j$ and the following sequence
\begin{equation}0\sr L_j/L'_j\xrightarrow{\alpha^*_j} N_j/N'_j\xrightarrow{\beta^*_j} (0:x)_{H^{j+1}_I(M,N)}\sr 0\tag{2}\end{equation} 
is exact. We now prove that $L_j$ is minimax for all $j<t$. Look at the exact sequence 
$$\Hom(R/I_M,N_j)\sr \Hom(R/I_M,N_j/N'_j)\sr\Ext^1_R(R/I_M,N'_j).$$ 
For any $j<t$, since $N'_j$ is finitely generated and $\Hom(R/I_M,N_j)$ is finitely generated, so that $\Hom(R/I_M, N_j/N'_j)$ is finitely generated. Hence we obtain by the sequence (2) that $\Hom(R/I_M,L_j/L'_j)$ is finitely generated for all $j<t$. While $\Supp(L_j/L'_j)\subseteq\Max(R)$ and $L_j/L'_j$ is $I_M$-torsion, so that $L_j/L'_j$ is Artinian by \cite[Theorem 1.3]{Me2} for all $j<t$. Thus $L_j$ is minimax for all $j<t$. Consider again the exact sequence (1), that is the following sequence 
\begin{equation}0\sr L_j\xrightarrow{\alpha_j} N_j\xrightarrow{\beta_j} (0:x)_{H^{j+1}_I(M,N)}\sr 0.\tag{1'}\end{equation} 
As $\Hom(R/I_M,N_j)$ is finitely generated for all $j<t$, then so is $\Hom(R/I_M,L_j)$ for all $j<t$. From this, we obtain by \cite[Proposition 4.3]{Me1} that $L_j$ is $I_M$-cofinite for all $j<t$.
Keep in mind that $N_j$ is $I_M$-cofinite for all $j<t-1$. Thus, from the sequence (1'), we have that $(0:x)_{H^j_I(M,N)}$ is $I_M$-cofinite for all $j<t$. In particular, $(0:x)_{H^{t-1}_I(M,N)}$ and $H^{t-1}_I(M,N)/xH^{t-1}_I(M,N)=L_{t-1}$ are $I_M$-cofinite. It implies that $H^{t-1}_I(M,N)$ is $I_M$-cofinite by Lemma \ref{tccofi}. Thus $H^j_I(M,N)$ is $I_M$-cofinite for all $j<t$. On the other hand, by the sequence (1') when $j=t-1$, we have the following exact sequence
$$\Hom(R/I_M,N_{t-1})\sr\Hom(R/I_M,(0:x)_{H^t_I(M,N)})\sr \Ext^1_R(R/I_M,L_{t-1}).$$
Thus, since $\Hom(R/I_M,N_{t-1})$ is finitely generated and $L_{t-1}$ is $I_M$-cofinite, so it yields that 
$\Hom(R/I_M,H^t_I(M,N))=\Hom(R/I_M,(0:x)_{H^t_I(M,N)})$ is finitely generated. Hence the claim is proved, and the proof of  Theorem \ref{Thm1} is complete.
\end{proof}

In \cite[Theorem 2.9]{DS}, Divaani Aazar and Sazeedeh showed that if $\p$ is a prime ideal in a complete local ring $(R,\m)$ with $\dim(R/\p)=1$, then $H^j_\p(M,N)$ is $\p$-cofinite for all $j\ge 0$ whenever $M$ has finite projective dimension. Here, even if without the hypothesis completeness of the ring and the finiteness of projective dimension of $M$, we still obtain the following result.

\begin{corollary}\label{CQ1} If $\dim\Supp(H^j_I(M,N))\le 1$ for all $j$ (this is the case, for example if $\dim(N/I_MN)\le 1$), then $H^j_I(M,N)$ is $I$-cofinite for all $j\ge 0$.
\end{corollary}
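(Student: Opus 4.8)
The plan is to read off the statement from Theorem \ref{Thm1}. Suppose first that $\dim\Supp(H^j_I(M,N))\le 1$ for all $j\ge 0$. Fix an arbitrary non-negative integer $t$. Then in particular $\dim\Supp(H^j_I(M,N))\le 1$ for all $j<t$, so Theorem \ref{Thm1} applies and yields that $H^j_I(M,N)$ is $I$-cofinite for all $j<t$. Since $t$ is arbitrary, $H^j_I(M,N)$ is $I$-cofinite for every $j\ge 0$, which is the assertion.

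It remains to check the parenthetical claim, namely that the hypothesis $\dim(N/I_MN)\le 1$ forces $\dim\Supp(H^j_I(M,N))\le 1$ for all $j$. For this I would first establish the support containment $\Supp(H^j_I(M,N))\subseteq\Supp(N/I_MN)$. Indeed, $\Supp\Ext^j_R(M/I^nM,N)\subseteq\Supp(M/I^nM)\cap\Supp(N)$, and $\Supp(M/I^nM)=\Supp(M/IM)=V(I_M)$ because $\sqrt{I_M}=\sqrt{I+\ann(M)}$; hence $\Supp\Ext^j_R(M/I^nM,N)\subseteq V(I_M)\cap\Supp(N)=\Supp(N/I_MN)$ for every $n$. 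Passing to the direct limit $H^j_I(M,N)=\dlim\Ext^j_R(M/I^nM,N)$ preserves this containment, so $\Supp(H^j_I(M,N))\subseteq\Supp(N/I_MN)$, and therefore $\dim\Supp(H^j_I(M,N))\le\dim(N/I_MN)\le 1$ for all $j$.

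I do not expect a genuine obstacle here: the corollary is a formal consequence of Theorem \ref{Thm1}, and the only small point requiring an argument is the support estimate above, which rests on the description of $H^j_I(M,N)$ as a direct limit of $\Ext$ modules, on the elementary identity $\sqrt{I_M}=\sqrt{I+\ann(M)}$, and on the fact (Lemma \ref{Lmbs}(ii)) that $H^j_I(M,N)$ is $I_M$-torsion.
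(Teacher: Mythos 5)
Your proposal is correct and is essentially the argument the paper has in mind: the paper states Corollary \ref{CQ1} without proof as an immediate consequence of Theorem \ref{Thm1}, and your reduction (fix an arbitrary $t$, apply the theorem, let $t$ vary) is exactly how one reads it off. The parenthetical support estimate is also handled correctly: $\Supp\Ext^j_R(M/I^nM,N)\subseteq\Supp(M/I^nM)\cap\Supp(N)=V(I_M)\cap\Supp(N)=\Supp(N/I_MN)$, and since localization commutes with direct limits this containment passes to $H^j_I(M,N)=\dlim\Ext^j_R(M/I^nM,N)$, giving $\dim\Supp(H^j_I(M,N))\le\dim(N/I_MN)\le 1$.
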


We now recall the notion of Bass number: let $K$ be an $R$-module, $i$ an integer and $\p$ a prime ideal, then the $i$-th Bass number $\mu^i(\p,K)$ of $K$ with respect to $\p$ was defined by  $\mu^i(\p,K)=\dim_{k(\p)}(\Ext^i_R(R/\p,K)_\p).$ In  \cite{Kaw2},  S. Kawakami and K. I. Kawasaki proved that if $M$ has finite projective dimension and $\dim(R/I)=1$ then $\mu^i(\p,H^j_I(M,N))$ is finite for all $i, j\ge 0$ and all $\p\in\Spec(R)$. The next corollary is a generalization of this result.

\begin{corollary}\label{sbn} Assume that $\dim\Supp(H^j_I(M,N))\le 1$ for all $j$ (this is the case, for example if $\dim(N/I_MN)\le 1$). Then $\mu^i(\p,H^j_I(M,N))$ is finite for all $i, j\ge 0$ and all $\p\in\Spec(R)$.
\end{corollary}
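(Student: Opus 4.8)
The plan is to derive the statement entirely from the $I$-cofiniteness already established. By Corollary~\ref{CQ1}, the hypothesis $\dim\Supp(H^j_I(M,N))\le1$ for all $j$ makes $K:=H^j_I(M,N)$ an $I$-cofinite module for every $j$; in particular $\Supp(K)\sb V(I)$ and $\Ext^i_R(R/I,K)$ is finitely generated for all $i$. I then fix $i,j\ge0$ and $\p\in\Spec(R)$ and aim to show that $\mu^i(\p,K)=\dim_{k(\p)}\Ext^i_R(R/\p,K)_\p$ is finite.

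I would split into two cases. If $I\nsubseteq\p$, then $\p\notin V(I)\sp\Supp(K)$, so $K_\p=0$, whence $\Ext^i_R(R/\p,K)_\p\cong\Ext^i_{R_\p}(k(\p),K_\p)=0$ and $\mu^i(\p,K)=0$. If $I\sb\p$, then $R/\p$ is a finitely generated $R/I$-module, so it suffices to know that $I$-cofiniteness of $K$ propagates in the form: $\Ext^i_R(L,K)$ is finitely generated for every finitely generated $R/I$-module $L$. Applying this with $L=R/\p$ shows $\Ext^i_R(R/\p,K)$ is finitely generated, hence $\Ext^i_R(R/\p,K)_\p$ is a finitely generated $R_\p$-module annihilated by $\p R_\p$, i.e. a finite-dimensional $k(\p)$-vector space; thus $\mu^i(\p,K)<\infty$.

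The propagation statement is the only non-formal step, and it is standard (cf. \cite{Me1}): given a presentation $(R/I)^b\sr L\sr0$ with kernel $L'$ (again a finitely generated $R/I$-module), the long exact $\Ext_R(-,K)$-sequence yields the four-term exact pieces $\Ext^{i-1}_R((R/I)^b,K)\sr\Ext^{i-1}_R(L',K)\sr\Ext^i_R(L,K)\sr\Ext^i_R((R/I)^b,K)$; since $\Ext^\bullet_R((R/I)^b,K)=\Ext^\bullet_R(R/I,K)^b$ is finitely generated, an induction on $i$ (base case $\Hom_R(L,K)\hookrightarrow\Hom_R(R/I,K)^b$) together with Noetherianity forces $\Ext^i_R(L,K)$ to be finitely generated for all $i$ and all finitely generated $R/I$-modules $L$. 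For the parenthetical remark I would note that $\Supp H^j_I(M,N)\sb V(I_M)\cap\Supp N=\Supp(N/I_MN)$, so $\dim(N/I_MN)\le1$ indeed implies the hypothesis of the corollary. I expect the only point to watch is invoking this propagation lemma in the correct generality; everything else is bookkeeping with localization and long exact sequences.
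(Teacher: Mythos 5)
Your proof is correct and follows essentially the same approach as the paper: the paper also reduces to $I$-cofiniteness via Corollary \ref{CQ1}, splits into $I\nsubseteq\p$ and $I\subseteq\p$, and for the latter concludes that $\Ext^i_R(R/\p,H^j_I(M,N))$ is finitely generated — but it simply cites \cite[Proposition 1]{DM} for the propagation step that you prove by hand with the dimension-shift argument.
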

\begin{proof} If $I\nsubseteq\p$ then $\mu^i(\p,H^j_I(M,N))=0$. If $I\subseteq\p$ then $\Supp(R/\p)\subseteq\Supp(R/I)$, so that $\Ext^i_R(R/\p,H^j_I(M,N))$ is finitely generated for all $i, j$ by Corollary \ref{CQ1} and \cite[Proposition 1]{DM}. Therefore $\mu^i(\p,H^j_I(M,N))$ is finite for all $i, j$, as required.
\end{proof}

\section{Proof of Theorem \ref{Thm3}}

\begin{proof}[\bf Proof of Theorem \ref{Thm3}] 
We first consider the case of $\dim(M)\le 2$. By the short exact sequence 
$0\sr\Gamma_I(M)\sr M\sr\overline M\sr  0$ where $\overline M=M/\Gamma_I(M)$, we get the following exact sequence
$$H^{j-1}_I(\Gamma_I(M),N)\xrightarrow{f_j} H^j_I(\overline M,N)\xrightarrow{g_j} H^j_I(M,N)\xrightarrow{h_j} H^j_I(\Gamma_I(M),N)$$
(following \cite{HZ}). It implies the following exact sequences
$$0\sr\Im f_j\sr H^j_I(\overline M,N)\sr\Im g_j\sr 0\text{ and}$$
$$0\sr\Im g_j\sr H^j_I(M,N)\sr\Im h_j\sr 0.$$
Since $\Gamma_I(M)=(0:I^k)_M$ for some integer $k$, so that 
$$H^j_I(\Gamma_I(M),N)=H^j_{I^k}((0:I^k)_M,N)=\Ext^j_R(\Gamma_I(M),N)$$ 
for all $j$ by Lemma \ref{Lmbs}. Thus $\Im f_j$ and $\Im h_j$ are finitely generated for all $j$. So by the above exact sequences we obtain that $H^j_I(\overline M,N)$ is $I$-cofinite if and only if so is $H^j_I(M,N)$. Therefore we may assume that $\Gamma_I(M)=0$. Then there exists $x\in I$ such that $x$ is an $M$-regular element. From the short exact sequence $0\sr M\xrightarrow x M\sr M/xM\sr 0$ we get the following exact  sequence
$$H^j_I(M/xM,N)\mt (0:x)_{H^j_I(M,N)}\mt 0.$$
Since $\dim (M/xM)\le 1$, so that $\dim\Supp\big((0:x)_{H^j_I(M,N)})\big)\le 1$. Note that $H^j_I(M,N)$ is $I$-torsion and $x\in I$. Thus 
$$\dim\Supp(H^j_I(M,N))=\dim\Supp\big((0:x)_{H^j_I(M,N)})\big)\le 1$$
for all $j$. From this we obtain by Corollary \ref{CQ1} that $H^j_I(M,N)$ is $I$-cofinite for all $j$.

\noindent For the rest of this proof, we consider the case of $\dim(N)\le 2$. By the short exact sequence $0\sr\Gamma_I(N)\sr N\sr\overline N\sr 0$ where $\overline N=N/\Gamma_I(N)$, we get the following exact sequence
$$\Ext^j_R(M,\Gamma_I(N))\xrightarrow{u_j} H^j_I(M,N)\xrightarrow{v_j} H^j_I(M,\overline N)\xrightarrow{w_j} \Ext^{j+1}_R(M,\Gamma_I(N)).$$
It implies the following exact sequences
$$0\sr\Im u_j\sr H^j_I(M,N)\sr\Im v_j\sr 0\text{ and}$$ 
$$0\sr\Im v_j\sr H^j_I(M,\overline N)\sr\Im w_j\sr 0.$$
Thus $\Im u_j$ and $\Im w_j$ are finitely generated for all $j$. So by the above exact sequences we obtain that $H^j_I(M,\overline N)$ is $I$-cofinite if and only if so is $H^j_I(M,N)$. Hence we may assume that $\Gamma_I(N)=0$. So we can take $y\in I$ such that $y$ is an $N$-regular element. From the exact sequence $0\sr N\xrightarrow y N\sr N/yN\sr 0$ we have an exact sequence as follow
$$H^j_I(M,N/yN)\mt (0:y)_{H^{j+1}_I(M,N)}\mt 0$$
for all $j$. So that $\dim\Supp(H^j_I(M,N)) =\dim\Supp\big((0:y)_{H^j_I(M,N)}\big)\le 1$ for all $j\ge 1$. Note that 
$H^0_I(M,N)=\Hom(M,\Gamma_I(N))=\Hom(M,0)=0.$ 
Thus $\dim\Supp(H^j_I(M,N))\le 1$ for all $j$. From this we get by Corollary \ref{CQ1} that $H^j_I(M,N)$ is $I$-cofinite for all $j$, and this finishes the proof of  Theorem \ref{Thm3} .
\end{proof}

As immediate consequences of Theorem \ref{Thm3} we obtain the following results.

\begin{corollary}  If $\dim(R)\le 2$ the $H^j_I(M,N)$ is $I$-cofinite for all $j$, and all finitely generated $R$-modules $M, N$. 
 \end{corollary}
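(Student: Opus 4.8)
The plan is to reduce directly to Theorem~\ref{Thm3}. Since $M$ is a finitely generated $R$-module, one has $\Supp(M)\subseteq\Spec(R)$, hence $\dim(M)=\dim(R/\ann_R(M))\le\dim(R)\le 2$; equally $\dim(N)\le\dim(R)\le 2$. In either case the hypothesis of Theorem~\ref{Thm3} is met, so $H^j_I(M,N)$ is $I$-cofinite for all $j$, and this holds for every choice of finitely generated $M$ and $N$ because the bound $\dim(M)\le\dim(R)$ is uniform.

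There is essentially no obstacle here: the entire content is already contained in Theorem~\ref{Thm3}, and the only thing to observe is the elementary fact that the Krull dimension of a finitely generated module over a Noetherian ring does not exceed $\dim(R)$. No case analysis and no further appeal to the auxiliary lemmas of Section~2 (beyond what is packaged inside Theorem~\ref{Thm3}) is needed; the corollary is a one-line deduction.
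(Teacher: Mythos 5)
Your proof is correct and is exactly the reduction the paper has in mind: the corollary is stated as an immediate consequence of Theorem~\ref{Thm3}, and the only observation needed is that $\dim(M)\le\dim(R)\le 2$ (equivalently for $N$) since $M$ is finitely generated. Nothing further is required.
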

 
\begin{corollary}\label{Dchl}  If $\dim(N)\le 2$ the $H^j_I(N)$ is $I$-cofinite for all $j$. 
\end{corollary}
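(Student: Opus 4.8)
The plan is to obtain this as the immediate specialization $M=R$ of Theorem \ref{Thm3}. Recall that for every finitely generated $R$-module $N$ there is a natural isomorphism $H^j_I(R,N)\cong H^j_I(N)$ for all $j\ge 0$: since $R/I^nR=R/I^n$, the definition gives $H^j_I(R,N)=\dlim\Ext^j_R(R/I^n,N)$, which is exactly the $j$-th ordinary local cohomology module of $N$ with respect to $I$ (the well-known special case already noted in the Introduction).

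Given this identification, the hypothesis $\dim(N)\le 2$ is precisely the second alternative of Theorem \ref{Thm3} applied to the pair $(R,N)$. Hence Theorem \ref{Thm3} applies and yields that $H^j_I(R,N)=H^j_I(N)$ is $I$-cofinite for all $j$, which is the assertion.

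There is no real obstacle here: the only thing to verify is the identification $H^j_I(R,N)\cong H^j_I(N)$, and this is immediate from the definition of generalized local cohomology. If one prefers to avoid even that citation, one can simply rerun the $\dim(N)\le 2$ portion of the proof of Theorem \ref{Thm3} with $M=R$: reduce to the case $\Gamma_I(N)=0$ through the long exact sequence attached to $0\to\Gamma_I(N)\to N\to N/\Gamma_I(N)\to 0$, choose an $N$-regular element $y\in I$, use the surjection $H^j_I(N/yN)\twoheadrightarrow(0:y)_{H^{j+1}_I(N)}$ coming from $0\to N\xrightarrow{y}N\to N/yN\to 0$ together with $\dim(N/yN)\le 1$ to conclude $\dim\Supp\big(H^j_I(N)\big)\le 1$ for all $j$, and then finish by Corollary \ref{CQ1} in the case $M=R$. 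Invoking Theorem \ref{Thm3} directly is of course the shortest route.
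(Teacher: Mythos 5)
Your proposal is correct and matches the paper's intent exactly: the paper states this corollary as an immediate consequence of Theorem \ref{Thm3}, obtained by taking $M=R$ and using the identification $H^j_I(R,N)\cong H^j_I(N)$, which is precisely what you do.
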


We next consider furthermore a consequence of Theorem \ref{Thm1} and \ref{Thm3} on the finiteness of associated primes of generalized local cohomology modules. We first recall that an $R$-module $K$ is called {\it weakly Laskerian} if any quotient module of $K$ has finitely many associated primes (cf. \cite{DM}). Note that, all Artinian modules, all finitely generated modules, and all modules with finite support are weakly Laskerian. Moreover, if $0\sr K_1\sr K_2\sr K_3\sr 0$ is an exact sequence, then $K_2$ is weakly Laskerian if and only if $K_1$ and $K_3$ are both weakly Laskerian. Note that if $R$ is a Notherian local ring and $\dim(N)\le 3$ then the third author proved in \cite[Theorem 1.1]{Hoa} that the modules $H^j_I(M,N)$ has only finitely many associated prime ideals for all $j$. In the following, we obtain a stronger result .
 
 \begin{corollary}\label{Pro1} Assume that $(R,\m)$ is a Noetherian local ring. If $\dim(M)\le 3$ or $\dim(N)\le 3$ then $\Ext^i_R(R/I, H^j_I(M,N))$ is weakly Laskerian for all $i, j\ge 0$. In particular, $\Ass_R(H^j_I(M,N))$ is a finite set for all $j\ge 0$.
 \end{corollary}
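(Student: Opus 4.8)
The plan is to handle the hypotheses $\dim(M)\le 3$ and $\dim(N)\le 3$ separately but in parallel, reducing each — by splitting off the relevant $I$-torsion submodule and then cutting by one nonzerodivisor lying in $I$ — to a dimension-two situation governed by Theorem \ref{Thm3}, and then returning to $H^j_I(M,N)$ itself via the commutative-diagram argument of Lemma \ref{tccofi}. I would use throughout that the weakly Laskerian modules form a Serre subcategory (closed under submodules, quotients and extensions, as recalled after Corollary \ref{Dchl}), that consequently $\Ext^i_R(L,K)$ is weakly Laskerian whenever $L$ is finitely generated and $K$ is weakly Laskerian (it is a subquotient of a finite direct sum of copies of $K$), and that an $I$-cofinite module $K$ has $\Ass_R(K)=\Ass_R(\Hom_R(R/I,K))$ finite; in particular an $I$-cofinite module whose support has dimension $\le 1$ has finite support, hence is weakly Laskerian.

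The first ingredient is an auxiliary statement in dimension two: if $\dim(M')\le 2$ or $\dim(N')\le 2$, then $H^j_I(M',N')$ is weakly Laskerian for all $j$. For $\dim(N')\le 2$ this is precisely \cite[Lemma 3.1]{Hoa}. For $\dim(M')\le 2$, Theorem \ref{Thm3} gives that each $H^j_I(M',N')$ is $I$-cofinite, and the proof of Theorem \ref{Thm3} shows, after passing from $M'$ to $M'/\Gamma_I(M')$ (whose contribution in each cohomological degree is finitely generated by Lemma \ref{Lmbs}), that $\dim\Supp(H^j_I(M'/\Gamma_I(M'),N'))\le 1$; combining this with the remark above, $H^j_I(M',N')$ is weakly Laskerian.

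Now assume $\dim(N)\le 3$. From $0\sr\Gamma_I(N)\sr N\sr\overline N\sr 0$, with $\overline N=N/\Gamma_I(N)$, one gets the long exact sequence whose $\Gamma_I(N)$-terms are the finitely generated modules $H^j_I(M,\Gamma_I(N))\cong\Ext^j_R(M,\Gamma_I(N))$ (Lemma \ref{Lmbs}); splitting it, applying $\Ext^i_R(R/I,-)$ and using the Serre property, $\Ext^i_R(R/I,H^j_I(M,N))$ is weakly Laskerian for all $i,j$ if and only if the same holds with $\overline N$ in place of $N$, so I may assume $\Gamma_I(N)=0$. If $N=0$ we are done; otherwise pick an $N$-regular $x\in I$, set $K_j=H^j_I(M,N)$, and use the exact sequences
\[0\sr K_j/xK_j\sr H^j_I(M,N/xN)\sr (0:x)_{K_{j+1}}\sr 0\qquad(j\ge 0)\]
coming from $0\sr N\xrightarrow{x}N\sr N/xN\sr 0$, noting $K_0=\Hom_R(M,\Gamma_I(N))=0$. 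Since $\dim(N/xN)\le 2$, the auxiliary statement makes each $H^j_I(M,N/xN)$ weakly Laskerian, so each $K_j/xK_j$ (a submodule) and each $(0:x)_{K_j}$ (for $j\ge 1$, a quotient of $H^{j-1}_I(M,N/xN)$) is weakly Laskerian, whence $\Ext^i_R(R/I,K_j/xK_j)$ and $\Ext^i_R(R/I,(0:x)_{K_j})$ are weakly Laskerian for all $i,j$. Fixing $j$ and running the argument in the proof of Lemma \ref{tccofi} verbatim with ``weakly Laskerian'' replacing ``finitely generated'' at each step — using $x\in I$, so that $\Ext^i_R(R/I,K_j)=(0:x)_{\Ext^i_R(R/I,K_j)}$ — gives that $\Ext^i_R(R/I,K_j)$ is weakly Laskerian for all $i$. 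The case $\dim(M)\le 3$ runs in parallel: via $0\sr\Gamma_I(M)\sr M\sr\overline M\sr 0$ and $H^j_I(\Gamma_I(M),N)\cong\Ext^j_R(\Gamma_I(M),N)$ (Lemma \ref{Lmbs}) one reduces to $\Gamma_I(M)=0$, picks an $M$-regular $x\in I$ (so $\dim(M/xM)\le 2$), and uses the exact sequences $0\sr K_j/xK_j\sr H^{j+1}_I(M/xM,N)\sr (0:x)_{K_{j+1}}\sr 0$ from $0\sr M\xrightarrow{x}M\sr M/xM\sr 0$ together with the auxiliary statement and the same diagram chase. Finally, since $\Supp_R(H^j_I(M,N))\sb V(I)$ we have $\Ass_R(H^j_I(M,N))=\Ass_R(\Hom_R(R/I,H^j_I(M,N)))$, and $\Hom_R(R/I,H^j_I(M,N))=\Ext^0_R(R/I,H^j_I(M,N))$ is weakly Laskerian, so $\Ass_R(H^j_I(M,N))$ is finite.

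I expect the main difficulty to be that $H^j_I(M,N)$ itself is \emph{not} weakly Laskerian in general (this is exactly the phenomenon behind Hartshorne's counterexamples), so one cannot simply transport the weak Laskerian property of the dimension-two module $H^j_I(M,N/xN)$ (respectively $H^j_I(M/xM,N)$) across the short exact sequences above to $H^j_I(M,N)$; one can only track $\Ext^i_R(R/I,-)$, and the device that makes this work is precisely the Lemma \ref{tccofi}-type diagram chase, which genuinely needs the cutting element to lie in $I$. A secondary, more technical, point is the auxiliary statement when $\dim(M')\le 2$: the support bound $\dim\Supp\le 1$ is only available after splitting off $\Gamma_I(M')$, and it is the combination of this bound with the $I$-cofiniteness from Theorem \ref{Thm3} that forces finiteness of the support, hence weak Laskerian-ness.
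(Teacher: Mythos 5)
Your proof is correct, but it takes a genuinely different route from the paper's. The two arguments share the opening moves: split off the relevant $\Gamma_I$-piece (tracking $\Ext^i_R(R/I,-)$ across the resulting short exact sequences, since $\Im f_j$ and $\Im h_j$, resp.\ $\Im u_j$ and $\Im w_j$, are finitely generated) and then cut by a regular element $x\in I$ to drop the dimension of $M$ or $N$ by one. From there they diverge. The paper first notes that this reduces to $\dim\Supp(H^j_I(M,N))\le 2$ for all $j$, then passes to the $\m$-adic completion (via $H^j_I(M,N)\otimes_R\widehat R\cong H^j_{\widehat I}(\widehat M,\widehat N)$ and \cite[Lemma 2.1]{Mar}) and argues by contradiction: if some $\Ext^u_R(R/I,H^v_I(M,N))$ had a quotient with infinitely many associated primes, one localizes at $S=R\setminus\bigcup_{l\in\n}\p_l$ for a countable family of them avoiding $\m$; countable prime avoidance \cite[Lemma 3.2]{MV} gives $\m\cap S\ne\emptyset$, so the localized support has dimension $\le 1$ and Corollary \ref{CQ1} forces $S^{-1}K$ to be finitely generated, a contradiction. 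You instead isolate a dimension-two weakly Laskerian statement (that $H^j_I(M',N')$ is weakly Laskerian whenever $\dim M'\le 2$ or $\dim N'\le 2$, the second case being \cite[Lemma 3.1]{Hoa} and the first being a small addendum to the proof of Theorem \ref{Thm3}: after reducing to $\Gamma_I(M')=0$ one has $\dim\Supp(H^j_I(M',N'))\le 1$, which together with the $I$-cofiniteness from Theorem \ref{Thm3} and locality of $R$ forces finite support), and you then observe that the diagram chase in Lemma \ref{tccofi} is really a statement about any Serre subcategory containing the finitely generated modules; running it with ``weakly Laskerian'' in place of ``finitely generated'' transports the property from $H^j_I(M,N/xN)$ (resp.\ $H^{j+1}_I(M/xM,N)$) to $\Ext^i_R(R/I,H^j_I(M,N))$. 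Your route avoids both the completion step and Marley's countable-prime-avoidance device, at the cost of extracting and proving the auxiliary dimension-two statement; the paper's route avoids the auxiliary statement but is less transparent. I checked the details of your version --- in particular that $\Ext^i_R(L,K)$ is weakly Laskerian whenever $L$ is finitely generated and $K$ is weakly Laskerian, that the exact sequences $0\to K_j/xK_j\to H^j_I(M,N/xN)\to(0:x)_{K_{j+1}}\to 0$ and $0\to K_j/xK_j\to H^{j+1}_I(M/xM,N)\to(0:x)_{K_{j+1}}\to 0$ are the correct ones, and that every step of the Lemma \ref{tccofi} chase only uses closure of the target class under submodules, quotients and extensions --- and it all holds. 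Your recasting of Lemma \ref{tccofi} as a Serre-subcategory statement is a nice conceptual gain.
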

 \begin{proof} Assume that $\dim(M)\le 3$. By similar arguments as in the proof of Theorem \ref{Thm3}, we obtain the following exact sequences
 $$0\sr\Im f_j\sr H^j_I(\overline M,N)\sr\Im g_j\sr 0\text{ and }$$
$$0\sr\Im g_j\sr H^j_I(M,N)\sr\Im h_j\sr 0,$$
where $\overline M=M/\Gamma_I(M)$. Thus we get the following exact sequences
$$...\sr\Ext^i_R(R/I,\Im f_j)\sr\Ext^i_R(R/I,H^j_I(\overline M,N))\sr\Ext^i_R(R/I,\Im g_j)\sr...\text{ and}$$
$$...\sr\Ext^i_R(R/I,\Im g_j)\sr \Ext^i_R(R/I,H^j_I(M,N))\sr\Ext^i_R(R/I,\Im h_j)\sr...$$
 Moreover, note that $\Im f_j$ and $\Im h_j$ are finitely generated for all $j$. It follows that $\Ext^i_R(R/I,H^j_I(M,N))$ is weakly Laskerian if and only if so is the module $\Ext^i_R(R/I,H^j_I(\overline M,N))$. Therefore we may assume that $\Gamma_I(M)=0$. Thus we get an exact sequence $0\sr M\xrightarrow{x} M\sr M/xM\sr 0$ where $x\in I$ is a regular element of $M$. It implies that $H^j_I(M/xM,N)\sr (0:x)_{H^j_I(M,N)}\sr 0$ is an exact sequence. Hence, as $\dim(M/xM)\le 2$, so we obtain that 
$$\dim\Supp(H^j_I(M,N))\le 2\text{ for all }j\ge 0.$$ 
For the case $\dim(N)\le 3$, by similar arguments as above we may reduce to the hypothesis that $\Gamma_I(N)=0$. Then by the following exact sequence 
$$H^j_I(M,N/yN)\sr (0:y)_{H^{j+1}_I(M,N)}\sr 0$$ with $y\in I$ is an $N-$regular element, 
we get that $\dim\Supp(H^j_I(M,N))\le 2\text{ for all }j\ge 0.$  

\noindent Therefore, for the rest of this proof, we need only to claim the weakly Laskerianness of $\Ext^u_R(R/I,H^v_I(M,N))$ for all $u, v\ge 0$ provided that 
$$\dim\Supp(H^j_I(M,N))\le 2 \text{ for all }j\ge 0.$$
Note that $H^j_I(M,N)\otimes_R\widehat R\cong H^j_{\widehat I}(\widehat M,\widehat N)$. Therefore, in view of \cite[Lemma 2.1]{Mar}, we can assume that $R$ is complete with $\m$-adic topology. We now claim the weakly Laskerianness of $\Ext^u_R(R/I,H^v_I(M,N))$ by way of contradiction. For any integers $u, v$, we set $K=\Ext^u_R(R/I,H^v_I(M,N))$. Assume that there exists a submodule $T$ of $K$ such that $\Ass(K/T)$ is an infinite set. Then there is a countably infinite subset $\{\p_l\}_{l\in\n}$ of $\Ass(K/T)$ such that $\p_l\not=\m$ for all $l\in\n$. Let $S=R\setminus\bigcup_{l\in\n}\p_l$. Then $S$ is a multiplicative closed subset of $R$. Since $\{\p_l\}_{l\in\n}\subseteq\Ass(K/T)$, so that $\{S^{-1}\p_l\}_{l\in\n}\subseteq\Ass_{S^{-1}R}(S^{-1}K/S^{-1}T)$. Thus $\Ass_{S^{-1}R}(S^{-1}K/S^{-1}T)$ is an infinite set.  On the other hand, as $\m\nsubseteq\p_l$ for all $l\in\n$, we get by \cite[Lemma 3.2]{MV} that $\m\nsubseteq\bigcup_{l\in\n}\p_l$; and so that $\m\cap S\not=\emptyset$. It implies that $\dim\Supp(H^j_{S^{-1}I}(S^{-1}M,S^{-1}N))\le 1$ for all $j\ge 0$. From this, we obtain by Corollary \ref{CQ1} that $$S^{-1}K=\Ext^u_{S^{-1}R}(S^{-1}R/S^{-1}I, H^v_{S^{-1}I}(S^{-1}M,S^{-1}N))$$ is finitely generated. It implies that $S^{-1}K/S^{-1}T$ is finitely generated. Hence $\Ass_{S^{-1}R}(S^{-1}K/S^{-1}T)$ is a finite set. On the other hand, by the hypothesis of $T$, the set $\Ass_{S^{-1}R}(S^{-1}K/S^{-1}T)$ is infinite. Hence we obtain a contradiction, and the claim follows. The last conclusion is clear.
\end{proof}

\end{document}